\newcommand{\fkt}{{\cal F}_{kT}}
\newcommand{\gn}{{\cal G}_{n}}
\newcommand{\sig}{\sigma\sigma^*}
\newcommand{\nrn}{\rightarrow+\infty}
\newcommand{\xrn}{\xrightarrow}
\newcommand{\ER}{\mathbb {R}}
\newcommand{\EN}{\mathbb {N}}
\newcommand{\PE}{\mathbb {P}}
\newcommand{\ES}{\mathbb{E}}
\newcommand{\psg}{\langle }
\newcommand{\psd}{\rangle }
\newcommand{\cdeuxf}{\mathbf{(C_F^2)}}
\newcommand{\cunf}{\mathbf{(C_F^1)}}
\newcommand{\X}{{\bar{X}}}
\newcommand{\pnt}{{\cal P}^{(n,T)}} 
\newcommand{\usim}{\underset{\textnormal{\scalebox{1}{$^\sim$}}}}
\newcommand{\sapi}{\mathbf{(S_{a,p})}(i)}
\newcommand{\sap}{\mathbf{(S_{a,p})}}
\newcommand{\un}{\underline}
\newtheorem{theorem}{\textnormal{\bf{T\scriptsize{HEOREM}}}}
\newtheorem{prop}{\textnormal{\bf{P\scriptsize{ROPOSITION}}}}
\newtheorem{lemme}{\textnormal{\bf{L\scriptsize{EMMA}}}}
\theoremstyle{definition}
\theoremstyle{remark}
\newtheorem{Remarque}{\textnormal{\bf{R\scriptsize{EMARK}}}}
\author{Gilles Pag\`es\footnote{ Laboratoire de Probabilit\'es et Mod\`eles Al\'eatoires, UMR 7599, UPMC, Case 188, 4 pl. Jussieu,
F-75252 Paris Cedex 5, France, E-mail: \texttt{gilles.pages@upmc.fr}},
Fabien Panloup\footnote{Institut de Math\'ematiques de Toulouse, Universit\'e Paul Sabatier$\&$INSA Toulouse, 135, av. de Rangueil, F-31077 Toulouse Cedex 4, France, E-mail: \texttt{fabien.panloup@math.univ-toulouse.fr}}
}
\title{\textbf{Ergodic approximation of the distribution of a stationary diffusion : rate of convergence }}
\begin{document}
\maketitle
\begin{abstract}  We extend  to Lipschitz continuous functionals either of the true paths or of  the Euler scheme  with decreasing step of  a wide class of  Brownian ergodic  diffusions, the Central Limit Theorems  formally established for  their marginal empirical measure of these processes (which is classical for the diffusions and more recent as concerns their discretization schemes). We illustrate our results by simulations in connection with barrier option pricing.
\end{abstract}

\noindent \textit{Keywords}: stochastic differential equation; stationary process;  steady regime; ergodic diffusion; Central Limit Theorem; Euler scheme.

\medskip
\noindent \textit{AMS classification (2000)}: 60G10, 60J60, 65C05, 65D15, 60F05.

\section{Introduction}
In a recent paper (\cite{PP1}), we investigated  weighted empirical measures based on some Euler schemes with
decreasing step in order to approximate recursively the distribution $\PE_\nu$ 
 of a stationary Feller Markov process $X:=(X_t)_{t\ge 0}$ with invariant distribution $\nu$.
To be precise, let
$(\bar{X}_t)$ be such an Euler scheme, let
$(\Gamma_k)_{k\ge 1}$ denote its sequence of discretization times and let $(\eta_k)_{k\ge 1}$ be a sequence of weights.
On the one hand we showed under some Lyapunov-type mean-reverting assumptions  on the
coefficients  of the $SDE$ and some  conditions on the steps and on the  weights that 
\begin{equation}\label{convarticle1}
{{\bar{\cal \nu}}}^{(n)}(\omega,F)=\frac{1}{\eta_1+\ldots+\eta_n}\sum_{k=1}^n \eta_k
F(\bar{X}_{\Gamma_k+.})\xrightarrow{n\rightarrow+\infty}\PE_\nu(F)=\int\ES[F(X^x)]\nu(dx)\qquad a.s.,
\end{equation}
for a broad class of functionals $F$  including bounded continuous
functionals for the Skorokhod topology. On the other hand, in the marginal case, $i.e.$ when $F(\alpha)=f(\alpha(0))$, then the procedure converges to $\nu(f)$. When the Poisson equation related to the infinitesimal generator has a solution, this convergence is ruled by a Central Limit Theorem ($CLT$): this has been extensively investigated in the literature (for continuous Markov processes, see~\cite{bhatta82}, for the Euler scheme with decreasing step of Brownian diffusions, see~\cite{LP1,lemaire1}). As concerns L\'evy driven $SDE$s, see~\cite{panloup3}.

\noindent Our aim in this paper is to extend some of  these rate results to functionals of the path process and its associated Euler scheme with decreasing step, $i.e.$ to study the rate of convergence to $\PE_\nu(F)$ of $(\frac{1}{t}\int_0^t F({X_{s+.}})ds)_{t\ge1}$ and  $({\bar{\cal \nu}}^{(n)}(\omega,F))_{n\ge1}$
respectively. Here, we choose to assume that $(X_t)$  is  an $\ER^d$-valued process solution to 
\begin{equation}\label{sde}
dX_t=b(X_t)dt+\sigma(X_t) dW_t,
\end{equation}
where $(W_t)_{t\ge0}$ is  a ${q}$-dimensional Brownian motion and $b$ and $\sigma$ are {\em Lipschitz continuous functions} with values in $\ER^d$ and $\mathbb{M}_{d,{q}}$ respectively, where $\mathbb{M}_{d,{q}}$ denotes the set of $d\times {q}$-matrices. Under these assumptions, strong existence and uniqueness hold and $(X_t)$ is a Markov process whose semi-group is denoted by $(P_t)$. We also assume that $(X_t)$ has a unique invariant distribution $\nu$ and we denote by $\PE_\nu$, the distribution of $(X_t)$ when stationary.

\medskip
Let us now focus on the discretization of $(X_t)$. We are going to introduce some continuous-time Euler schemes with decreasing step:  denoting by $(\Gamma_n)_{n\ge1}$  the {\em increasing} sequence of discretization times starting from $\Gamma_0=0$, we assume that the  step sequence defined by 
$\gamma_n:=\Gamma_n-\Gamma_{n-1}$, $n\ge 1$,  is  {\em nonincreasing } and satisfies 
 \begin{equation}\label{nonincreagam}
\lim_{n\rightarrow+\infty}\gamma_n=0,\quad
\Gamma_n= \sum_{k=1}^n\gamma_k\xrightarrow{n\nrn}+\infty,
\end{equation}
First, we introduce   the discrete time  constant Euler scheme $(\bar{X}_{\Gamma_n})_{n\ge0}$ recursively defined at the discretization  times $\Gamma_n$ by $\bar X_0= x_0$ and 

\begin{equation}\label{Eulerscheme}
{\bar{X}}_{{\Gamma_{n+1}}}=\bar{X}_{{\Gamma_{n}}}+\gamma_{n+1}
b(\bar{X}_{{\Gamma_{n}}})+\sigma(\bar{X}_{{\Gamma_{n+1}}})(W_{\Gamma_{n+1}}-W_{\Gamma_{n}}).
\end{equation}

\noindent There are several ways to extend this definition into a continuous time process. The simplest one  is the stepwise constant Euler scheme $(\bar{X}_t)_{t\ge0}$ defined by
\[
\forall\, n\!\in \EN,\quad \forall\,t \in [\Gamma_n,\Gamma_{n+1}),\quad \bar X_t =\bar X_{\Gamma_n}.
\]
The stepwise constant Euler scheme is a right continuous left limited process (referred as c\`adl\`ag throughout the paper, following the French acronym). This scheme is easy to simulate provided one is able to compute the functions $b$ and $\sigma$ at a reasonable cost.  One could also introduce the linearly interpolated process built on $(\bar{X}_{\Gamma_n})_{n\ge0}$ but except the fact that it is a continuous process, it has no specific virtue in term of simulability or convergence rate.

The second possibility to extend the discrete time Euler scheme is what we will call  the  \textit{genuine} Euler scheme, denoted  from now on by   $(\xi_t)_{t\ge0}$. It is defined by interpolating the two part of the discrete time scheme in its own scale (time, Brownian motion). It is defined by
\begin{equation}\label{genuine}
\forall\, n\!\in \EN,\quad \forall\,t \in [\Gamma_n,\Gamma_{n+1}),\quad \xi_t=\bar X_{\Gamma_n}+(t-\Gamma_n)b(\bar X_{\Gamma_n}) + \sigma(\bar X_{\Gamma_n})  (W_{t}-W_{\Gamma_{n+1}})   
\end{equation}

Such an approximation looks more accurate than the former one, especially in a functional setting, as it has been emphasized --~in a constant step framework~-- in the literature on several problems related to the Monte Carlo estimation of ($a.s.$ continuous) functionals of a diffusion (with a finite horizon) (see~$e.g.$~\cite{BOLE}, Chapter~5). This follows from the classical fact that the $L^p$-convergence rate of this scheme for the sup norm is $\sqrt{\gamma}$ instead of $-\sqrt{\gamma}\log \gamma$ for its stepwise constant counterpart (where $\gamma$ stand for the step). On the other hand, the simulation of a functional of $(\xi_t)_{t\in [\tau,\tau +T]}$ is deeply connected with the  simulation of the Brownian bridge so that it is only possible for specific functionals (like running maxima, etc).

A convenient and synthetic form for the genuine Euler scheme is to write it as an It\^o process satisfying the following pseudo-diffusion equation
\begin{equation}\label{genuine2}
\xi_t=x_0+\int_0^t b(\xi_{\un{s}})ds+\int_0^t \sigma(\xi_{\un{s}})dW_s 
\end{equation}
where
\begin{equation}\label{sbar} 
\un{t}=\Gamma_{N(t)}\quad\textnormal{with}\quad N(t)=\min\{n\ge 0, \Gamma_{n+1}>t\}.
\end{equation}

\noindent Taking advantage of this notation for the stepwise constant Euler scheme, one can also note that 
\[
\forall\, t\in \ER_+,\qquad \bar X_t = \bar X_{\un{t}}.
\]
When  necessary, we will adopt the more precise notation $\X^{x,(h_n)}$ for a stepwise constant continuous-time Euler scheme to specify starting at $x\in\ER^d$ at time $0$ with  a nonincreasing step sequence $(h_n)_{n\ge1}$ satisfying~(\ref{nonincreagam}).

%


\medskip
Since we will deal with possibly c\`adl\`ag approximations of continuous processes we will introduce the spaces $\mathbb{D}_{uc}(I,\ER^d)$  of $\ER^d$-valued c\`adl\`ag functions on $I=\ER_+$ or $[0,T]$, $T>0$, endowed with the topology of the uniform convergence on compact sets, rather than the classical Skorokhod topology (see~\cite{billingsley}). In fact, one must keep in mind that if $\alpha:I\to \ER^d$ is a continuous function and $(\alpha_n)$ is a sequence of c\`adl\`ag functions, $\alpha_n \stackrel{Sk}{\to}\alpha$ iff $\alpha\stackrel{uc}{\to}\alpha$ (with obvious notations). Furthermore, usual Skorokhod distance $d_{Sk}$ (so-called $J_1$ and $J_2$ topologies)   on $\mathbb{D}([0,T],\ER^d)$   all satisfy 
$$
d_{Sk}(\alpha,\beta)\le \|\alpha-\beta\|_{_T}:=\sup_{t\in [0,T]}|\alpha(t)-\beta(t)| 
$$
so that  any functional $F:\mathbb{D}([0,T],\ER^d)\to \ER$ which is Lipschitz with respect to such a distance $d_{Sk}$ will be Lipschitz continuous with respect to $\|\,.\,\|_{_T}$ (hence measurable with respect  to the Borel $\sigma$-field induced by the Skorokhod topology).

\noindent At this stage, we need to introduce further notations related to the long run behaviour of processes (or simply functions). Let $\delta_{\alpha}(d\beta)$ denotes the Dirac mass at $\alpha\!\in \mathbb{D}(\ER_+,\ER^d)$ and $\alpha^{(u)}:=(\alpha_{u+t})_{t\ge0}$ denotes  the $u$-shift of $\alpha$. 

\smallskip
\noindent We will see below that our aim is to elucidate the asymptotic $\PE(d\omega)$-$a.s.$ weak behaviour of  the empirical measures $\displaystyle  \frac 1t \int_0^t \delta_{Y^{(s)}(\omega)}(d\beta)ds $ as $t$ goes to infinity, where $Y$ will be the diffusion $X$ itself or one of its (simulatable) Euler time discretizations.   This suggests to introduce a time dicretization at times $\Gamma_n$  of the above time integral like we did to define the Euler scheme. This leads us to introduce, for any $\alpha\!\in \mathbb{D}(\ER_+,\ER^d)$, the following abstract ``Euler'' empirical means
\[
\bar{\nu}^{(n)}(\alpha,d\beta) = \frac{1}{\Gamma_n} \sum_{k=1}^n \gamma_k\,\delta_{\alpha^{(\Gamma_{k-1})}}(d\beta) =  \frac{1}{\Gamma_n}\int_0^{\Gamma_n} \delta_{\alpha^{(\un{s})}}(d\beta)ds.
\]
Then, for a functional $F$ defined on  $\mathbb{D}(\ER_+,\ER^d)$ and $\alpha\in\mathbb{D}(\ER_+,\ER^d)$, 
 \[
 \bar{\nu}^{(n)}(\alpha,F)= \int_{\mathbb{D}(\ER_+,\ER^d)}F(\beta)  \bar{\nu}^{(n)}(\alpha,d\beta)= \frac{1}{\Gamma_n}\sum_{k=1}^n \gamma_k F(\alpha^{(\Gamma_{k-1})})= \frac{1}{\Gamma_n} \int_0^{\Gamma_n} F(\alpha^{(\un{s})})ds.
 \]
 
\noindent In the following, we will use this sequence of empirical measures for both stepwise constant and genuine Euler schemes. Compared to~\cite{PP1}, this means that we assume that the sequence of weights $(\eta_n)$ satisfies $\eta_n=\gamma_n$ for every $n\ge1$. 

\bigskip
\noindent{\sc Additional notations.} $\rhd$ $\psg x,y\psd=\sum_ix_iy_i $ will denote the canonical  inner product  and $|x|= \sqrt{\psg x,x\psd}$ will denote Euclidean norm of a vector $x\!\in \ER^d$.

\noindent $\rhd$ Let $A=[a_{ij}] \!\in \mathbb{M}_{d,q}$ be an $\ER$-valued matrix with $d$ rows and $q$ columns. $A^*$ will denote the transpose of $A$, ${\rm Tr}(A)=\sum_i a_{ii}$ its trace and $\|A\|:= \sqrt{{\rm Tr}(AA^*)}= (\sum_{ij}a^2_{ij})^{\frac12}$. If $d=q$, one writes $A x^{\otimes 2}$ for $x^*Ax$.

\section{Main results}
\subsection{Assumptions and background}
We denote by $({\cal F}_t)_{t\ge0}$ the usual augmentation of $\sigma(W_s,0\!\le\! s\!\le\! t)$ by $\PE$-negligible sets. Since $b$ and $\sigma$ are Lipschitz continuous functions, Equation~(\ref{sde}) admits  a 
 unique $({\cal F}_t)$-adapted solution $(X_t^x)_{t\ge0}$ starting from $x\in\ER^d$.
More generally, for every $u\ge 0$ and every finite ${\cal F}_u$-measurable random variable $\Xi$, we can consider  $(X^{(u),\Xi}_{t})_{t\ge0}$, unique strong solution to the $SDE$:
\begin{equation} \label{equationshiftee}
 dY_t=b(Y_t)dt+\sigma(Y_t)dW_t^{(u)},\quad Y_0=\Xi,
\end{equation}
where $W_t^{(u)}=W_{u+t}-W_u$, $t\!\ge\!0$,  is the $u$-shifted Brownian motion (independent of ${\cal F}_u$). Note that $X^x_t=X^{(0),x}_{t}$ and that $X^{(u),\Xi}_t$ can be also defined through the flow of~(\ref{sde}) by setting
\[
X^{(u),\Xi}_{t}=\big(X^{(u),x}_t\big)_{|x=\Xi}.
\]
\noindent Throughout this paper, we consider a measurable functional $F:\mathbb{D}_{uc}([0,T],\ER^d)\rightarrow\ER$. We will denote by $F_{_T}$ the stopped functional defined on $\mathbb{D}_{uc}(\ER_+,\ER^d)$   by
\begin{equation}\label{functionalt}
\forall\, \alpha\in\mathbb{D}_{uc}(\ER_+,\ER^d),\qquad F_{_T}(\alpha)=F(\alpha^T)\quad\textnormal{with}\quad \alpha^T(t)=\alpha(t\wedge T),\quad t\ge0.
\end{equation}  
Let us introduce the assumptions on $F$.

\medskip
\noindent $\cunf$: $F:\mathbb{D}_{uc}([0,T],\ER^d)\rightarrow\ER$ is a bounded and Lipschitz continuous functional.

\medskip
\noindent We set 
$$
f_{_F}(x)=\ES[F_{_T}(X^x)]=\ES[F(X^x_t,0\le t\le T)].
$$
It is classical background (see $e.g.$~\cite{kunita}) that, under the Lipschitz assumption on $b$ and $\sigma$, 
$\ES[\sup_{t\in [0,T]}|X^x_t-X^y_t|]\le C_{b,\sigma,T} |x-y|$ so that $f_{_F}$ is in turn clearly Lipschitz continuous.
Additional regularity properties (like differentiability) can be transfered from $f_F$ provided $F$, $b$ and $\sigma$ are themselves differentiable enough (see $e.g.$~\cite{kunita}). Furthermore, it follows from its very definition and the Markov property that 
\[
\nu(f_{_F}) = \PE_{\nu}(F_{_T})=\int\ES[F_{_T}(X^x)]\nu(dx).
\]
\noindent $\cdeuxf$: There exists   a bounded  ${\cal C}^2$-function $g_{_F}:\ER^d\rightarrow\ER$ with bounded Lipschitz continuous derivatives such that 
$$
 \forall x\in\ER^d,\qquad f_{_F}(x)-\nu(f_{_F})={\cal A}g_{_F}
$$
where ${\cal A}$ denotes the infinitesimal generator of the diffusion~(\ref{sde})
defined for every $ {\cal C}^2$-function $f$ on $\ER^d$ by 
$$ {\cal A}f(x)=\psg\nabla f,b\psd(x)+\frac{1}{2}{\rm Tr}(\sigma^*D^2f\sigma(x)).$$

\begin{Remarque}  In fact, we need in the sequel that $f_F$ satisfies a $CLT$ for the marginal occupation measures which follows (see~\cite{LP1, panloup3}) from Assumption $\cdeuxf$ combined with a Lyapunov stability assumption (such as $\mathbf{(S_{a,p})}$ introduced below).
Namely, we have for a class of regular  functions $f$ satisfying $f={\cal A}g+C$
\begin{equation}\label{TCLmarg1}
\sqrt{t}\left( \frac{1}{t}\int_0^t f(X_s^x)ds-\nu(f)\right)\xrn[t\rightarrow+\infty]{{\cal L}}{\cal N}\left(0,\sigma^2_f\right)
\end{equation}
and as soon as  $\displaystyle \sum_{k=1}^n \frac{\gamma_k^2}{\sqrt{\Gamma_k}}\xrn{n\nrn}0,$
\begin{equation}
\sqrt{\Gamma_n}\left(\frac{1}{\Gamma_n}\sum_{k=1}^n \gamma_k f(\bar{X}_{\Gamma_{k-1}})-\nu(f)\right)\xrn[n\rightarrow+\infty]{\cal L}{\cal N}\left(0,\sigma^2_f\right),\label{TCLmarg2}
\end{equation}
 where 
 $$
 \sigma^2_f=\int_{\ER^d} |  \sigma^*\nabla g (x) |^2\nu(dx)=-2\int g(x) {\cal A}g(x)\nu(dx)
 $$ 
 and ${\cal L}$ denotes the weak convergence of (real valued) random variables.  For details on results in these directions, see~\cite{bhatta82} for the continuous case and ~\cite{LP2,lemaire1,panloup3} for the decreasing step Euler scheme.
 
 \smallskip
 Checking when Assumption $\cdeuxf$ is fulfilled is equivalent to solve the Poisson equation ${\cal A}u=f$ on $\ER^d$. When $f$ has compact support, well-known results about the same equation in a bounded domain lead to Assumption $\cdeuxf$ when the diffusion is uniformly elliptic
 (see $e.g.$ \cite{lady}, Theorems III.1.1 and III.1.2). Such an assumption on $f_{_F}$ is clearly unrealistic. In the general case, in \cite{Parver1}, \cite{Parver2} and \cite{Parver3}, the problem is solved  under some ellipticity conditions in some Sobolev spaces and controls of the growth are given for $u$ and its first derivatives. Finally, when the diffusion is an Ornstein-Uhlenbeck process, one can refer to \cite{LP1} where the problem is solved
  in ${\cal C}^2(\ER^d)$. 
 %
\end{Remarque}

\smallskip
\noindent Let us now introduce the Lyapunov-type stability assumptions on  $SDE$~\eqref{sde}. Let ${\cal E}\!{\cal Q}(\ER^d)$ denote the set of {\em Essentially Quadratic} functions, that is ${\cal C}^2$-functions
$V:\ER^d\rightarrow (0,\infty)$ such that 
\[
\lim_{|x|\rightarrow+\infty} V(x)=+\infty, \qquad \abs[\nabla V]\le C \sqrt{V}\quad \mbox{  and }\quad D^2V \mbox{ is  bounded.}
 \]
\noindent Note that since $V$ is continuous, $V$ attains its positive minimum $\underline{v}>0$ so that,  for any $A, r>0$, there exists a real constant $C_{_{A,r}}$ such that $A+V^r \le C_{_{A,r}}V^r$.

\noindent Let us come to the  mean-reverting assumption itself. First, for any symmetric $d\times d$ matrix $S$, set  $\lambda^+_S:= \max(0,\lambda_1,\ldots,\lambda_d)$ where $\lambda_1,\ldots,\lambda_d$ denote  the eigenvalues of $S$. Let 
$a\in(0,1]$ and $p\!\in [1,\,+\infty)$. We introduce the following mean-reverting assumption {\em  with intensity $a$}: 
\begin{flushleft}
$\mathbf{(S_{a,p}):}$ There exists a function $V\in{\cal E}\!{\cal
Q}(\ER^d)$ such that:
\begin{align*}
(i)& \quad   \textnormal{$\exists C_a>0$ such that } |b|^2+{\rm Tr}(\sig)\le C_aV^a.
&\\
(ii)&\quad \textnormal{There exist $\beta\in\ER$ and $\rho>0$
such that }\psg\nabla V,b\psd+\lambda_p{\rm Tr}(\sig)\le \beta-\rho V^a,
\end{align*}
\end{flushleft}
where $\displaystyle{\lambda_p:=\frac{1}{2}\sup_{x\in\ER^d}\lambda^+_{D^2V(x)+(p-1)\frac{\nabla V\otimes\nabla V}{V}}}$. The function $V$ is then called  a Lyapunov function for the diffusion $(X_t)_{t\ge 0}$.

\smallskip
In Theorem 3 of~\cite{LP2}, it is shown that this assumption leads to an $a.s.$ marginal weak convergence result to the set of invariant distributions of the diffusion. When $p\ge2$ and the invariant distribution is unique, this result reads as follows.
\begin{prop}
\label{funcpremiere1} Let  $a\in(0,1]$ and $p\ge2$ such that
$\mathbf{(S_{a,p})}$ holds. Then, 
\begin{equation}\label{eq:rappellyap}
\sup_{n\ge1}\frac{1}{\Gamma_n}\sum_{k=1}^n\gamma_k V^{\frac{p}{2}+a-1}(\bar{X}_{\Gamma_{k-1}})<+\infty\quad a.s.
\end{equation}
Let $\nu$ denote the unique invariant distribution of~\eqref{sde}. Then,  $a.s.$,
$$
\frac{1}{\Gamma_n}\sum_{k=1}^n\gamma_k f(\bar{X}_{\Gamma_{k-1}})\xrightarrow{n\rightarrow+\infty}\nu(f)
$$
for every continuous function $f$ satisfying $f(x)=o(V^{\frac{p}{2}+a-1}(x))$ as $|x|\rightarrow+\infty$.
\end{prop}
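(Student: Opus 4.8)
\noindent The plan is to establish the stability estimate \eqref{eq:rappellyap} as the core of the argument and then deduce the convergence of the occupation measures from it, the general weak convergence result of \cite{LP2}, and a truncation. Throughout, write $\bar\nu^{(n)}(\omega,g)=\frac{1}{\Gamma_n}\sum_{k=1}^n\gamma_k\,g(\X_{\Gamma_{k-1}})$ for the marginal empirical measure acting on $g:\ER^d\to\ER$, so that \eqref{eq:rappellyap} reads $\sup_n\bar\nu^{(n)}(\omega,V^{\frac p2+a-1})<+\infty$ a.s.

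\noindent For \eqref{eq:rappellyap} I would apply a second-order It\^o-type Taylor expansion to $V^{\frac p2}$ along the Euler increments $\X_{\Gamma_k}-\X_{\Gamma_{k-1}}=\gamma_k b(\X_{\Gamma_{k-1}})+\sigma(\X_{\Gamma_{k-1}})(W_{\Gamma_k}-W_{\Gamma_{k-1}})$ and condition on ${\cal F}_{\Gamma_{k-1}}$. Since $D^2V^{\frac p2}=\frac p2 V^{\frac p2-1}\big(D^2V+(\tfrac p2-1)\tfrac{\naV}{V}\big)$, that $\tfrac{\naV}{V}\ge0$ and $\tfrac p2-1\le p-1$, the very definition of $\lambda_p$ gives $\frac12{\rm Tr}\big(\sigma^*D^2V^{\frac p2}\sigma\big)\le\frac p2 V^{\frac p2-1}\lambda_p{\rm Tr}(\sig)$; combined with the first-order term and $\sapii$, this yields
\[
{\cal A}V^{\frac p2}\le \tfrac p2 V^{\frac p2-1}\big(\psg\nabla V,b\psd+\lambda_p{\rm Tr}(\sig)\big)\le \tfrac p2 V^{\frac p2-1}(\beta-\rho V^a).
\]
Meanwhile $\sapi$ (together with $|\nabla V|\le C\sqrt V$ and $D^2V$ bounded) controls the discretization remainder by $C\gamma_k^2V^{\frac p2+a-1}$ up to lower order powers. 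Writing $V^{\frac p2}(\X_{\Gamma_n})=V^{\frac p2}(x_0)+\sum_{k\le n}A_k+M_n$, with $A_k$ the above predictable drift and $M_n$ a martingale, using $a>0$ to absorb the $V^{\frac p2-1}$ contribution into a fraction of the mean-reverting term plus a constant, and taking $\gamma_k$ small enough to absorb the remainder, one gets for $k$ large $A_k\le-c\,\gamma_k V^{\frac p2+a-1}(\X_{\Gamma_{k-1}})+C\gamma_k$. Summing, discarding the nonnegative $V^{\frac p2}(\X_{\Gamma_n})$ and dividing by $\Gamma_n$ then gives \eqref{eq:rappellyap}, \emph{provided} $M_n/\Gamma_n\to0$ a.s.

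\noindent This last point is where the real difficulty lies. By the strong law of large numbers for martingales it suffices that $\sum_k\Gamma_k^{-2}\,\ES[(\Delta M_k)^2\,|\,{\cal F}_{\Gamma_{k-1}}]<+\infty$, but the conditional variance of the dominant Brownian part of $\Delta M_k$ is of order $\gamma_k V^{p+a-1}(\X_{\Gamma_{k-1}})$, a power of $V$ strictly larger than the one \eqref{eq:rappellyap} itself controls. Breaking this apparent circularity is the main obstacle; following \cite{LP2} one resolves it either by first running the whole scheme with a larger exponent (so that the higher moment needed here is a priori bounded) or by a localization of $V$ --~this is exactly the technical content of Theorem~3 there, which I would ultimately quote.

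\noindent Finally, for the convergence statement, Theorem~3 of \cite{LP2} ensures that, almost surely, every weak limiting value of $\big(\bar\nu^{(n)}(\omega,\cdot)\big)_n$ is an invariant distribution of \eqref{sde}; tightness of this family holds by \eqref{eq:rappellyap} since $V\to+\infty$. As $\nu$ is the \emph{unique} invariant distribution, $\bar\nu^{(n)}(\omega,\cdot)\Rightarrow\nu$ a.s., which settles the claim for bounded continuous $f$. For a continuous $f$ with $f=o(V^{\frac p2+a-1})$, take a continuous cut-off $\chi_R$ equal to $1$ on $\{V\le R\}$ and $0$ on $\{V\ge R+1\}$ and split $f=f\chi_R+f(1-\chi_R)$: the first part is bounded continuous, hence $\bar\nu^{(n)}(\omega,f\chi_R)\to\nu(f\chi_R)$, while given $\varepsilon>0$ one chooses $R$ with $|f|\le\varepsilon V^{\frac p2+a-1}$ on $\{V>R\}$, so that $|\bar\nu^{(n)}(\omega,f(1-\chi_R))|\le\varepsilon\sup_n\bar\nu^{(n)}(\omega,V^{\frac p2+a-1})$ and $|\nu(f(1-\chi_R))|\le\varepsilon\,\nu(V^{\frac p2+a-1})$, both $O(\varepsilon)$ since $\nu(V^{\frac p2+a-1})<+\infty$ by \eqref{eq:rappellyap} and the portmanteau theorem. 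Letting $n\to\infty$ and then $\varepsilon\to0$ yields $\bar\nu^{(n)}(\omega,f)\to\nu(f)$ a.s.
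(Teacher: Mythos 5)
Your proposal is correct, but note that the paper itself offers no proof of this proposition: it is stated explicitly as a recall of Theorem~3 of \cite{LP2}, specialized to the case where the invariant distribution is unique. What you have written is essentially a faithful reconstruction of the argument of that reference. The Lyapunov computation is right: $D^2V^{p/2}=\frac p2V^{\frac p2-1}\bigl(D^2V+(\frac p2-1)\frac{\naV}{V}\bigr)$, and since $p\ge2$ and $\naV\ge0$ the definition of $\lambda_p$ (which is built with the larger coefficient $p-1$) indeed yields ${\cal A}V^{p/2}\le\frac p2V^{\frac p2-1}(\beta-\rho V^a)$ via $\sapii$, with the Euler remainder absorbed thanks to $\sapi$. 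You also correctly identify the one genuinely delicate point, namely that the conditional variance of the martingale term is of order $\gamma_kV^{p+a-1}$, one "level" above what \eqref{eq:rappellyap} controls; the resolution (running the same Lyapunov recursion on $V^p$ rather than $V^{p/2}$ to get the a priori $L^1$-bounds needed for the Chow/Kronecker step --- this is exactly why $\lambda_p$ is defined with $p-1$ and not $\frac p2-1$) is the content of \cite{LP2}, which you quote at that point just as the paper does for the whole statement. The deduction of the convergence for $f=o(V^{\frac p2+a-1})$ from tightness, identification of the limits as invariant measures, uniqueness of $\nu$, and a truncation plus the bound $\nu(V^{\frac p2+a-1})<+\infty$ obtained by Fatou along weakly convergent subsequences, is the standard uniform-integrability argument and matches the way the paper itself uses this proposition elsewhere (e.g.\ in the proof of Lemma~\ref{lemme0}$(iii)$). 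In short: the paper delegates everything to \cite{LP2}; you reconstruct the delegated proof correctly and defer to the same source only for the step that genuinely requires it.
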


\begin{Remarque} In the case $V(x)=1+|x|^2$, one checks for instance that for a given $a\in(0,1]$, Assumption $\mathbf{(S_{a,p})}$ is fulfilled for every $p\ge1$ if ${\rm Tr}(\sigma\sigma^*)(x)=o(1+|x|^{2a})$ as $|x|\rightarrow+\infty$ and
$$
b(x)=-\rho(x)\frac{x}{|x|}+\mathcal{T}(x)\qquad\textnormal{where $C_1 |x|^{2a-1}\le\rho(x)\le C_2|x|^{2a-1},$ }
$$
and $\mathcal{T}$ satisfies for every $ x\in\ER^d$ $\psg \mathcal{T}(x),x\psd=0$  and $|\mathcal{T}(x)|\le C(1+|x|^a)$.
\end{Remarque}
\medskip 
\noindent 
As concerns the uniqueness of the invariant distribution $\nu$, we need an additional assumption related to the transition $P_{_T}$. Namely, we assume that:

\smallskip
\noindent $\mathbf{(S^\nu_T)}$: $\nu$ is an invariant distribution for $(P_t)_{t\ge0}$ and the unique one for $P_{_T}$.   

\smallskip
\noindent Then, $\nu$ is in particular the unique invariant distribution for $(P_t)_{t\ge0}$. In fact, checking uniqueness of the invariant distribution for $P_{_T}$ at a given time $T>0$ is a standard way to establish uniqueness for the whole semi-group $(P_t)_{t\ge0}$. To this end, one may use the following two typical criterions:

\smallskip
$\bullet$ Irreducibility based on ellipticity: for every $x\in\ER^d$, $P_{_T}(x,dy)$ has a density $(p_{_T}(x,y))_{y\in\ER^d}$ $w.r.t.$ the Lebesgue measure $\lambda_d$ and
$\lambda_d(dy)-a.s.$, $p_{_T}(x,y)>0$ for every $x\in\ER^d$. 

\smallskip
  $\bullet$ Asymptotic confluence:  for every bounded Lipschitz continuous function $f$, for every compact subset $K$ of $\ER^d$, 
$$
\sup_{(x_1,x_2)\in K} \left|P_{kT}f(x_1)-P_{kT}f(x_2)\right|\xrn{k\nrn}0 \quad\textnormal{(see $e.g.$~\cite{basak, lemaire1}).}
$$




\subsection{Main results}
\noindent We are now in position to state our main results. 
\begin{theorem} \label{thprincipal} Let $T>0$. Assume $b$ and $\sigma$ are Lipschitz continuous functions
 satisfying  $\mathbf{(S_{a,p})}$ with an essentially quadratic Lyapunov function $V:\ER^d\to (0,+\infty)$ and parameters $a\in(0,1]$ and $p>2$.  Assume furthermore that $V$ satisfies the   growth assumption:
\begin{equation}\label{gaV}
\liminf_{|x|\rightarrow+\infty} \frac{V^{p+a-1}(x)}{|x|}>0.
\end{equation}
Assume that the uniqueness assumption $\mathbf{(S^\nu_T)}$ holds. Finally, assume that the step sequence $(\gamma_n)_{n\ge 1}$ satisfies~(\ref{nonincreagam}) and 
\begin{equation}\label{condpas33}
\sum_{k\ge1}\frac{\gamma_k^{3/2}}{\sqrt{\Gamma_k}}<+\infty.
\end{equation}
Let $F:\mathbb{D}_{uc}([0,T],\ER^d)\to \ER$ be a functional satisfying $\mathbf{(C_F^1)}$ and $\mathbf{(C_F^2)}$. 

\smallskip
\noindent $(a)$ {\sc  Genuine Euler scheme}:  Then 
\begin{equation}\label{cvloi2}
\sqrt{\Gamma_n}\left(\bar{\nu}^{(n)}(\xi(\omega),F_{_T})-\PE_\nu(F_{_T})\right)\xrn[n\rightarrow+\infty]{\cal L}{\cal N}\left(0,\sigma^2_{_F}\right),
\end{equation}
where
\begin{equation}\label{eq:sigmacarre}
\sigma^2_{_F}=\frac{1}{T}\left(\int\ES\left[\Big(\ES\Big(A_{_{2T}}^x\,|\,{\cal F}_{_{2T}}\Big)-\ES\Big(A_{_T}^x\,|\,{\cal F}_{_T}\Big)-\int_T^{2T}\sigma^*\nabla g_F(X_u^x)dW_u\Big)^2\right]\nu(dx)\right)
\end{equation}
and $A^x_t:=\int_0^t \left(F_{_T}(X^x_{u+.})-f_{_F}(X^x_u)\right) du$, $t\ge 0$, (is ${\cal F}_{t+T}$-adapted).

\smallskip
\noindent $(b)$ {\sc  Stepwise constant Euler scheme}: furthermore, if there exists $\delta>0$ such that
\begin{equation}\label{condpas34}
\sum_{k\ge1}\frac{\gamma_k^{\frac{3}{2}-\delta}}{\sqrt{\Gamma_k}}<+\infty,
\end{equation}
then, 
\begin{equation}\label{cvloi}
\sqrt{\Gamma_n}\left(\bar{\nu}^{(n)}(\bar{X}(\omega),F_{_T})-\PE_\nu(F_{_T})\right)\xrn[n\rightarrow+\infty]{\cal L}{\cal N}\left(0,\sigma^2_{_F}\right).
\end{equation}

\end{theorem}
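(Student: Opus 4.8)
The plan is to treat (a) and (b) together: the limiting law and the variance (\ref{eq:sigmacarre}) are read off from the \emph{true} stationary diffusion via a block--martingale central limit theorem, and the sole role of the Euler scheme (hence the only difference between the two statements) is the size of a path--functional discretization error, which is exactly what separates (\ref{condpas33}) from (\ref{condpas34}). Since $\nu(f_{_F})=\PE_\nu(F_{_T})$, I first center each increment and split it, for $\alpha=\xi$ or $\alpha=\bar X$, as
\[
F_{_T}(\alpha^{(\Gamma_{k-1})})-\nu(f_{_F})=\big[F_{_T}(\alpha^{(\Gamma_{k-1})})-f_{_F}(\alpha_{\Gamma_{k-1}})\big]+\big[f_{_F}(\alpha_{\Gamma_{k-1}})-\nu(f_{_F})\big].
\]
The second (marginal) bracket is of the form ${\cal A}g_{_F}+C$ covered by (\ref{TCLmarg2}); by It\^o's formula applied to $g_{_F}$ it becomes, up to $O(1)$ boundary terms and remainders controlled by $\cdeuxf$ and Proposition~\ref{funcpremiere1}, the stochastic integral $\int\sigma^*\nabla g_{_F}\,dW$. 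The first (functional) bracket is centered in the forward sense: setting $H_u:=F_{_T}(X_{u+.})-f_{_F}(X_u)$ for the stationary diffusion, the Markov property gives $\ES[H_u\,|\,{\cal F}_u]=0$ while $H_u$ is ${\cal F}_{u+T}$-measurable, so the functional fluctuation has a finite ($T$-)range innovation structure.

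\textbf{Block martingale and identification of $\sigma^2_{_F}$.} Working first with the diffusion started under $\nu$, I assemble both pieces into one martingale-difference array indexed by blocks of length $T$. Combining $\int_0^{\cdot}H_u\,du$ with $-\int_0^{\cdot}\sigma^*\nabla g_{_F}\,dW$, the increment attached to the block $[jT,(j+1)T]$ is
\[
\Delta_j=\ES\big(A^x_{(j+1)T}\,|\,{\cal F}_{(j+1)T}\big)-\ES\big(A^x_{jT}\,|\,{\cal F}_{jT}\big)-\int_{jT}^{(j+1)T}\sigma^*\nabla g_{_F}(X^x_u)\,dW_u,
\]
which is ${\cal F}_{(j+1)T}$-measurable and satisfies $\ES[\Delta_j\,|\,{\cal F}_{jT}]=0$ (the $T$-boundary layer of $\int H_u\,du$ is exactly absorbed by the conditional expectations of $A^x$), while $\sum_j\Delta_j$ differs from $\int_0^{\cdot}H_u\,du-\int_0^{\cdot}\sigma^*\nabla g_{_F}\,dW$ only by a negligible boundary term. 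In the stationary regime the $\Delta_j$ are identically distributed; the representative is the block $[T,2T]$ (rather than $[0,T]$, whose degenerate start $A^x_0=0$ would spoil stationarity of the past), and there are $\sim\Gamma_n/T$ of them, which produces the prefactor $1/T$ and the average $\int\cdots\nu(dx)$, giving exactly (\ref{eq:sigmacarre}). The conclusion then follows from a martingale-difference CLT, whose two hypotheses — convergence of the normalized conditional variance to $\sigma^2_{_F}$ (a \emph{marginal} ergodic average of $x\mapsto\ES[\Delta_1^2\,|\,X_0=x]$ to which Proposition~\ref{funcpremiere1} applies) and a Lindeberg condition — I verify using the boundedness of $F$ and $g_{_F}$ together with the $V$-moment control of Proposition~\ref{funcpremiere1}.

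\textbf{Transfer to the Euler schemes.} It remains to replace, inside each window, the Euler path $\alpha^{(\Gamma_{k-1})}$ by the genuine diffusion $X^{\alpha_{\Gamma_{k-1}}}$ issued from the current position (and the non-stationary empirical average by its $\nu$-ergodic counterpart). By the Lipschitz assumption $\cunf$, the first replacement costs at most the sup-norm $L^2$-error of the scheme over a window of length $T$, namely of order $\sqrt{\gamma_k}$ for the genuine scheme and $\sqrt{\gamma_k}\,|\log\gamma_k|$ for the stepwise constant one. Summing the normalized errors $\tfrac{1}{\sqrt{\Gamma_n}}\sum_k\gamma_k\times(\text{rate})$ produces the series $\sum_k\gamma_k^{3/2}/\sqrt{\Gamma_k}$ in case (a) — hence (\ref{condpas33}) — and $\sum_k\gamma_k^{3/2}|\log\gamma_k|/\sqrt{\Gamma_k}$ in case (b), which is dominated by (\ref{condpas34}) through $|\log\gamma_k|\le C_\delta\gamma_k^{-\delta}$; this is precisely why the stepwise constant scheme requires the marginally stronger step condition. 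The growth assumption (\ref{gaV}) together with $\sapi$ (which yields $|b|,\|\sigma\|\le CV^{a/2}$) ensures these error terms remain integrable against the empirical measure uniformly in $n$.

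\textbf{Main obstacle.} The core difficulty is the functional fluctuation: unlike the marginal case it is neither a martingale increment nor a Markov functional, and the windows $[\Gamma_{k-1},\Gamma_{k-1}+T]$ overlap, creating genuine short-range dependence that must be reorganized into the block martingale so that the cross-terms reassemble exactly into (\ref{eq:sigmacarre}) with no residual covariance. This is compounded by the fact that the whole construction must be run on the \emph{non-stationary} Euler scheme rather than on the stationary diffusion, so the block-martingale computation and the ergodic identification of the variance have to be carried out jointly with the path-functional discretization estimates above — and it is the need for the \emph{sup-norm} rate (not merely the marginal rate) that forces the step conditions and constitutes the bulk of the technical work.
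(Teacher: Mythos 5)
Your architecture is essentially the paper's: the same $T$-block martingale with increments $\ES(A_{(j+1)T}\,|\,{\cal F}_{(j+1)T})-\ES(A_{jT}\,|\,{\cal F}_{jT})-\int_{jT}^{(j+1)T}\sigma^*\nabla g_{_F}(X_u)dW_u$ adapted to $({\cal F}_{kT})$, the same split into a functional innovation plus a Poisson-equation/It\^o marginal part, the correct identification of $\sigma^2_{_F}$ via the representative block $[T,2T]$, and the same accounting of the step conditions~\eqref{condpas33}/\eqref{condpas34} through the sup-norm strong error of the two schemes over a window of length $T$ (the paper reaches~\eqref{condpas34} via an $L^q$/Jensen bound of order $\gamma^{1/2-1/q}$ rather than your $\sqrt{\gamma}\,|\log\gamma|$, but both yield the same condition). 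This matches the decomposition of Lemma~\ref{lemme1} and the treatment of $\Theta_{n,1}$, $\Theta_{n,2}$ in Section~\ref{Thetastudy}.

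There is, however, a genuine gap in the step you compress into one clause, namely that the normalized conditional variance is ``a marginal ergodic average of $x\mapsto\ES[\Delta_1^2\,|\,X_0=x]$ to which Proposition~\ref{funcpremiere1} applies.'' Two distinct things go wrong. First, $\ES[\Delta_j^2\,|\,{\cal F}_{jT}]$ is \emph{not} a function of the position at time $jT$: writing $\Delta_j=\int_{(j-1)T}^{jT}\big(H_u-\ES(H_u\,|\,{\cal F}_{jT})\big)du+\ES\big(\int_{jT}^{(j+1)T}H_u\,du\,|\,{\cal F}_{(j+1)T}\big)-\int_{jT}^{(j+1)T}\sigma^*\nabla g_{_F}\,dW_u$, the increment depends on the whole path on $[(j-1)T,(j+2)T]$, so only $\ES[\Delta_j^2\,|\,{\cal F}_{(j-1)T}]$ is a Markov function $\Psi$ of the position one block earlier. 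One must then prove separately that $\frac1n\sum_j\big(\ES[\Delta_j^2\,|\,{\cal F}_{jT}]-\ES[\Delta_j^2\,|\,{\cal F}_{(j-1)T}]\big)\to0$ a.s.\ (a Chow/Kronecker martingale argument); this is exactly~\eqref{Eq51} in Lemma~\ref{lemme2} and it is not automatic. Second, even granting this, the resulting average $\frac1n\sum_k\Psi(\xi_{(k-1)T})$ is an \emph{equally weighted} empirical mean along the $T$-grid, whereas Proposition~\ref{funcpremiere1} only gives a.s.\ convergence of the $\gamma_k$-weighted empirical measure at the discretization times $\Gamma_k$. Establishing $\frac1n\sum_k\Psi(\xi_{(k-1)T})\to\nu(\Psi)$ a.s.\ requires a tightness bound for the $T$-grid empirical measure, an identification of its limits as $P_{_T}$-invariant (this is precisely why $\mathbf{(S^\nu_T)}$ is stated for $P_{_T}$ and not merely for the semigroup), and the continuity of $\Psi$; this is the content of Lemma~\ref{lemme0}$(ii)$--$(iii)$ and Lemma~\ref{lemme3}$(ii)$, a substantial block of the proof your plan does not supply. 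Relatedly, since everything runs on the non-stationary Euler scheme, the replacement of the Euler-based block increments by the diffusion-based ones must be carried out \emph{inside the conditional variance} (inequality~\eqref{Eq52}), not only in the first-moment comparison of the empirical means that produces~\eqref{condpas33}; that is a separate estimate, though it only needs $\gamma_n\to0$.
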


\begin{Remarque} \label{remarquecalcul}By a series of computations, we can obtain other expressions for $\sigma^2_{_F}$. In particular, 
we check in the Appendix~A  that $\sigma^2_{_F}$ reads
\begin{equation}\label{eq:sigmacarre2}
\sigma^2_{_F}=2\int_{0}^{T}(1-\frac vT)C_F(v)dv-2 \ES_{\nu}\Big(F_{_T}(X)\int_0^T\sigma^*\nabla g_{_F}(X_u)dW_u\Big)+  \int_{\ER^d} |  \sigma^*\nabla g_{_F} (x) |^2\nu(dx),
\end{equation}
where $\ES_{\nu}$ denotes the expectation under the stationary regime and $C_{_F}$ is the covariance function defined by 
\begin{equation}\label{covariancefunction}
C_{_F}(u)= \ES_{\nu}\big(F_{_T}(X_{u+.})-f_{_F}(X_u))(F_{_T}(X)-f_{_F}(X_0))\big).
\end{equation}
This expression is not clearly positive but has the advantage to separate the ``marginal part'' that is represented by the last term from the ``functional part'' which corresponds to the first two ones.

\smallskip 
\noindent For instance, when  $F(\alpha)=\phi(\alpha(0))$, $\phi$ being bounded and such that
$\phi-\nu(\phi)= {\cal A}h$ where $h$ is a bounded ${\cal C}^2$-function with bounded derivatives, then $f_{_F}=\phi$ and one observes that the first  two  terms of~\eqref{eq:sigmacarre2} are equal to $0$ so that $\sigma^2_{F} = \displaystyle \int_{\ER^d}|\sigma^*\nabla g_{_F}(x)|^2\nu(dx)$. This means that we retrieve the marginal $CLT$ given by~\eqref{TCLmarg2} (under a slightly more condition on the step sequence which is  adapted to the more general functionals we are dealing with,  thus,  more constraining than that of the original paper; see below for more detailed comments on the steps conditions).

\smallskip
If we now consider $F_T$ defined $F_{_T}(\alpha)=\phi(\alpha(T))$, $\phi$ satisfying the same assumptions as before, one can straightforwardly deduce from a simple change of variable that the limiting variance is still  
$\int_{\ER^d} |  \sigma^*\nabla h (x) |^2\nu(dx)$. In the appendix (Part B), we show that retrieving this limiting variance using~\eqref{eq:sigmacarre} is possible but requires some non trivial computations. In particular, this calculus emphasizes the intricate nature of the structure of the functional variance. 
\end{Remarque}
%
%
Given the form of $\bar{\nu}^{(n)}$, it seems natural to introduce the (non-\textit{simulatable}) sequence  
$$
\frac{1}{\Gamma_n}\int_{0}^{\Gamma_n}F_{_T}(\xi^{({u})})du
$$
which in fact appears naturally as a tool in the proof of the above theorem.
\begin{theorem} \label{thprincipal3} Assume  the assumptions of Theorem~\ref{thprincipal}(a). Then,

\begin{equation}\label{cvloi3}
{\sqrt{t}}\left(\frac{1}{t}\int_0^tF_{_T}(\xi^{(s)})ds-\PE_\nu(F_{_T})\right)\xrn[n\rightarrow+\infty]{\cal L}{\cal N}\left(0,\sigma^2_{_F}\right).
\end{equation}
\end{theorem}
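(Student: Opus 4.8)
The plan is to deduce Theorem~\ref{thprincipal3} from the already established central limit theorem of Theorem~\ref{thprincipal}$(a)$ for the simulatable functional $\bar\nu^{(n)}(\xi,F_{_T})=\frac{1}{\Gamma_n}\int_0^{\Gamma_n}F_{_T}(\xi^{(\un{s})})ds$, by a double application of Slutsky's lemma. Writing $c:=\PE_\nu(F_{_T})$ and $S_t:=\int_0^t F_{_T}(\xi^{(s)})ds$, the target quantity is $\frac{1}{\sqrt t}(S_t-ct)$. The two gaps to be filled are: $(i)$ replacing, at the discretization instants $t=\Gamma_n$, the continuous shift $\xi^{(s)}$ by the piecewise–frozen shift $\xi^{(\un{s})}$ underlying $\bar\nu^{(n)}$; and $(ii)$ passing from the subsequence $(\Gamma_n)$ to an arbitrary continuous time $t\to+\infty$.

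For step $(i)$ I would estimate the remainder $R_n:=\frac{1}{\sqrt{\Gamma_n}}\int_0^{\Gamma_n}\big(F_{_T}(\xi^{(s)})-F_{_T}(\xi^{(\un{s})})\big)ds$ in $L^1$. On each interval $[\Gamma_{k-1},\Gamma_k)$ one has $\un{s}=\Gamma_{k-1}$, so using the Lipschitz part of $\cunf$,
\[
\big|F_{_T}(\xi^{(s)})-F_{_T}(\xi^{(\un{s})})\big|\le [F]_{\mathrm{Lip}}\sup_{0\le t\le T}\big|\xi_{s+t}-\xi_{\Gamma_{k-1}+t}\big|.
\]
The right–hand side is the oscillation of the genuine scheme over sliding windows of length $s-\Gamma_{k-1}\le\gamma_k$; its expectation is $O(\sqrt{\gamma_k})$, the drift contributing $O(\gamma_k)$ and the stochastic integral $O(\sqrt{\gamma_k})$ through a Burkholder--Davis--Gundy / maximal estimate, the relevant moments of $b$ and $\sigma$ being controlled by $\sapi$ and the Lyapunov moment bounds behind Proposition~\ref{funcpremiere1}. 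This yields $\ES|R_n|\le \frac{C}{\sqrt{\Gamma_n}}\sum_{k=1}^n\gamma_k^{3/2}$, and Kronecker's lemma applied with the weights $\sqrt{\Gamma_k}\uparrow+\infty$ turns the summability condition~\eqref{condpas33} into $\frac{1}{\sqrt{\Gamma_n}}\sum_{k=1}^n\gamma_k^{3/2}\to0$, hence $R_n\to0$ in $L^1$ and in probability. Combined with Theorem~\ref{thprincipal}$(a)$ and Slutsky's lemma, this gives the CLT for $\frac{1}{\sqrt{\Gamma_n}}(S_{\Gamma_n}-c\Gamma_n)$.

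For step $(ii)$, fix $t\in[\Gamma_n,\Gamma_{n+1})$ with $n=N(t)\to+\infty$ and decompose
\[
\frac{1}{\sqrt t}(S_t-ct)-\frac{1}{\sqrt{\Gamma_n}}(S_{\Gamma_n}-c\Gamma_n)=\frac{1}{\sqrt t}\int_{\Gamma_n}^t\!\!\big(F_{_T}(\xi^{(s)})-c\big)ds+\Big(\tfrac{\sqrt{\Gamma_n}}{\sqrt t}-1\Big)\frac{1}{\sqrt{\Gamma_n}}(S_{\Gamma_n}-c\Gamma_n).
\]
The first term is bounded by $\frac{(\|F\|_\infty+|c|)\gamma_{n+1}}{\sqrt{\Gamma_n}}\to0$ using the boundedness part of $\cunf$, while in the second term $\frac{\sqrt{\Gamma_n}}{\sqrt t}-1\to0$ because $1\le t/\Gamma_n\le 1+\gamma_{n+1}/\Gamma_n\to1$, multiplying the tight sequence $\frac{1}{\sqrt{\Gamma_n}}(S_{\Gamma_n}-c\Gamma_n)$; both remainders vanish in probability uniformly in $t\in[\Gamma_n,\Gamma_{n+1})$. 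A last application of Slutsky's lemma then transfers the CLT from the subsequence $(\Gamma_n)$ to the whole family $t\to+\infty$, with the same variance $\sigma^2_{_F}$.

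The main obstacle is the increment estimate in step $(i)$: one must prove a bound $\ES[\sup_{0\le t\le T}|\xi_{s+t}-\xi_{\un{s}+t}|]\le C\sqrt{\gamma_{N(s)+1}}$ with a constant $C$ that does not deteriorate along the non–homogeneous, decreasing–step scheme, which requires checking that the Lyapunov moment controls are strong enough to dominate the $b$– and $\sigma$–contributions uniformly over the sliding windows. This is exactly where the genuine scheme is decisive: its $\sqrt\gamma$ sup–norm rate makes the single summability requirement~\eqref{condpas33} sufficient, whereas the extra logarithmic factor in the sup–norm rate of the stepwise constant scheme is what forces the strictly stronger condition~\eqref{condpas34} in Theorem~\ref{thprincipal}$(b)$.
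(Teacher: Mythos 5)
Your step (ii) (passing from the subsequence $(\Gamma_n)$ to arbitrary $t\to+\infty$ via boundedness of $F$) is exactly what the paper does, but your step (i) contains a genuine gap, and it is precisely the step the paper is structured to avoid. The claimed estimate $\ES\big[\sup_{0\le t\le T}|\xi_{s+t}-\xi_{\un{s}+t}|\big]\le C\sqrt{\gamma_{N(s)+1}}$ is false at that rate: this is the oscillation of the genuine scheme over a sliding window of length $T$ with lag $\delta=s-\un{s}$, and already for a standard Brownian motion $\ES[\sup_{0\le t\le T}|W_{t+\delta}-W_t|]$ is of order $\sqrt{\delta\log(T/\delta)}$, not $\sqrt{\delta}$. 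A Burkholder--Davis--Gundy bound applied to a \emph{single} increment gives $O(\sqrt{\delta})$, but the supremum over the continuum of windows costs either a logarithmic factor (by chaining) or, in the $L^p$ form actually available in the paper (Lemma~\ref{lemmem1}$(iv)$), a factor $\gamma^{-1/p}$: one only has $\ES[\sup_{t\in[0,T]}|\xi_{u+t}-\xi_{\un{u+t}}|^p\,|\,{\cal F}_{\un{u}}]\le C\phi^{\frac p2}(\xi_{\un{u}})\gamma_{N(u)+1}^{\frac p2-1}$, i.e.\ an $L^1$ rate $\gamma^{\frac12-\frac1p}$. With the corrected rate your Kronecker argument requires $\sum_k\gamma_k^{\frac32-\frac1p}/\sqrt{\Gamma_k}<+\infty$, which is condition~\eqref{condpas34}, strictly stronger than the hypothesis~\eqref{condpas33} of Theorem~\ref{thprincipal}$(a)$ under which Theorem~\ref{thprincipal3} is asserted. (This loss is exactly why Theorem~\ref{thprincipal}$(b)$ needs~\eqref{condpas34}; your closing remark attributes the $\sqrt{\gamma}$ sup-norm rate of the genuine scheme to the wrong quantity --- that rate concerns the strong error $\|\xi-X\|_{_T}$, not the sliding-window oscillation $\sup_{t\le T}|\xi_{s+t}-\xi_{\un{s}+t}|$.)

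The paper never compares $\xi^{(s)}$ with $\xi^{(\un{s})}$. Instead, the whole machinery of Sections~\ref{preliminaries}--\ref{PMT} (the decomposition of Lemma~\ref{lemme1}, the martingale CLT of Proposition~\ref{propTCL}, the estimates on $\Theta_{n,1},\Theta_{n,2},\Theta_{n,3}$) is carried out for the family ${\cal P}^{(n,T)}(\omega,F_{_T})=\frac{1}{nT}\int_0^{nT}F_{_T}(\xi^{(\usim{u})})du$ with an \emph{arbitrary} deterministic $\usim{u}\in[\un{u},u]$, and Proposition~\ref{proppnt} yields the CLT for every such choice under~\eqref{condpas33} alone. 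Theorem~\ref{thprincipal3} is then obtained by simply taking $\usim{u}=u$ and performing your step (ii); Theorem~\ref{thprincipal}$(a)$ corresponds to $\usim{u}=\un{u}\vee(\lfloor u/T\rfloor T)$, the residual comparison with $\un{u}$ being handled by the boundedness of $F$ on a set of $u$'s of small total measure, again with no Lipschitz modulus estimate. To repair your argument you would either have to assume~\eqref{condpas34} (weakening the theorem) or prove the CLT directly for the continuous shift $\usim{u}=u$, which is what the paper does.
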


Finally, we also state the Central Limit Theorem  for the stochastic process $(X_t)_{t\ge0}$ itself. This result can be viewed as a (partial)  extension to functionals of Bhattacharya's  $CLT$ established in~\cite{bhatta82} for a class of ergodic Markov processes.

\begin{theorem} \label{thprincipal2} Let $T>0$. Assume $b$ and $\sigma$  are Lipschitz continuous functions 
satisfying  $\mathbf{(S_{a,p})}$ with an essentially quadratic Lyapunov function  $V$ and parameters $a\in(0,1]$ and $p>2$. Assume $\mathbf{(S^\nu_T)}$ holds. Let $F:\mathbb{D}_{uc}([0,T],\ER^d)\to \ER$ be a functional satisfying $\mathbf{(C_F^1)}$ and $\mathbf{(C_F^2)}$. Then, for every $x\in\ER^d$,
\begin{equation}\label{cvloi3}
{\sqrt{t}}\left(\frac{1}{t}\int_0^tF(X_{u}^{(s),x},0\le u\le T)\,ds-\PE_\nu(F_{_T})\right)\xrn[n\rightarrow+\infty]{\cal L}{\cal N}\left(0,\sigma^2_{_F}\right).
\end{equation}
\end{theorem}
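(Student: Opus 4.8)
The plan is to reduce the statement to a central limit theorem for a single continuous $(\mathcal{F}_t)$-martingale, built by combining the Poisson equation granted by $\cdeuxf$ with a martingale extracted from the functional occupation integral. Write $X=X^x$ and $\Phi_s:=F_{_T}(X^{(s)})-f_{_F}(X_s)$, which is $\mathcal{F}_{s+T}$-measurable and satisfies $\ES[\Phi_s\mid\mathcal{F}_s]=0$ by the Markov property and the very definition of $f_{_F}$. Using $\PE_\nu(F_{_T})=\nu(f_{_F})$, I split
\[
\frac1t\int_0^t F_{_T}(X^{(s)})\,ds-\PE_\nu(F_{_T})=\frac1t\int_0^t\Phi_s\,ds+\frac1t\int_0^t\big(f_{_F}(X_s)-\nu(f_{_F})\big)\,ds .
\]
To the marginal (second) term I apply It\^o's formula to $g_{_F}(X_t)$ and the relation $f_{_F}-\nu(f_{_F})=\mathcal{A}g_{_F}$, obtaining $\int_0^t(f_{_F}(X_s)-\nu(f_{_F}))\,ds=g_{_F}(X_t)-g_{_F}(X_0)-\int_0^t\sigma^*\nabla g_{_F}(X_s)\,dW_s$; since $g_{_F}$ is bounded under $\cdeuxf$, the boundary term is $O(1)$ and vanishes after division by $\sqrt t$.

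For the functional (first) term, the key observation is that $M_t:=\ES[A^x_t\mid\mathcal{F}_t]$ is an $(\mathcal{F}_t)$-martingale: because $\ES[\Phi_s\mid\mathcal{F}_u]=0$ whenever $u\le s$, the tower property yields $\ES[M_t\mid\mathcal{F}_u]=M_u$. Moreover $A^x_t-M_t=\int_{(t-T)^+}^t(\Phi_s-\ES[\Phi_s\mid\mathcal{F}_t])\,ds$, since $\Phi_s$ is already $\mathcal{F}_t$-measurable for $s\le t-T$; as $F$ is bounded under $\cunf$, this remainder is bounded by $4T\|F\|_\infty$, hence negligible once divided by $\sqrt t$. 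Setting $\mathcal{M}_t:=M_t-\int_0^t\sigma^*\nabla g_{_F}(X_s)\,dW_s$ — a continuous martingale, every martingale of the Brownian filtration being continuous — I therefore get
\[
\sqrt t\Big(\frac1t\int_0^t F_{_T}(X^{(s)})\,ds-\PE_\nu(F_{_T})\Big)=\frac1{\sqrt t}\mathcal{M}_t+o_{\PE}(1).
\]
A direct computation of the increment of $\mathcal{M}$ over $[T,2T]$ gives precisely $\ES(A^x_{2T}\mid\mathcal{F}_{2T})-\ES(A^x_T\mid\mathcal{F}_T)-\int_T^{2T}\sigma^*\nabla g_{_F}(X_u)\,dW_u$, i.e. the random variable whose second moment integrated in $x$ against $\nu$ equals $T\sigma^2_{_F}$.

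It remains to prove the martingale CLT for $\mathcal{M}_t/\sqrt t$. Sampling along the grid $kT$ produces a square-integrable martingale-difference array $D_k:=\mathcal{M}_{kT}-\mathcal{M}_{(k-1)T}$ relative to $(\mathcal{F}_{kT})$. Because the $T$-memory of $F_{_T}$ couples two consecutive blocks, each conditional variance $\ES[D_k^2\mid\mathcal{F}_{(k-1)T}]$ is a (polynomially growing) functional of the path-window of $X$ on $[(k-2)T,(k-1)T]$; its Ces\`aro average $\frac1n\sum_{k=1}^n\ES[D_k^2\mid\mathcal{F}_{(k-1)T}]$ then converges, by the occupation-measure ergodic theorem for the diffusion started at $x$ (the path-functional analogue for the true diffusion of \eqref{convarticle1}, valid under $\sap$, $\mathbf{(S^\nu_T)}$ and the growth assumption \eqref{gaV}), to the $\PE_\nu$-mean of that functional, equal to $T\sigma^2_{_F}$. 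Combined with a conditional Lindeberg condition — guaranteed by the uniform moment bounds supplied by $\sap$ with $p>2$ together with \eqref{gaV}, the $A^x$-part contributing only bounded increments — the martingale-difference CLT gives $\frac1{\sqrt n}\sum_{k=1}^n D_k\xrn[n\nrn]{\mathcal L}\mathcal{N}(0,T\sigma^2_{_F})$. Taking $t=nT$, dividing by $\sqrt T$, and absorbing the last partial block (of size $O(1)$ in $L^2$, hence negligible after $/\sqrt t$) yields $\mathcal{M}_t/\sqrt t\xrn[t\nrn]{\mathcal L}\mathcal{N}(0,\sigma^2_{_F})$, which is the claim.

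The main obstacle is the convergence of the conditional quadratic variation to the constant $T\sigma^2_{_F}$ for an \emph{arbitrary, non-stationary} starting point $x$: one must keep track of the cross-block dependence created by the $T$-memory of $F_{_T}$ when identifying the block increments of $\mathcal{M}$, and then push the Ces\`aro average to its $\nu$-integral through a path-functional ergodic theorem while simultaneously controlling the (polynomial) growth of the variance functional and verifying Lindeberg — all of which rest on the moment budget provided by $\sap$ with $p>2$ and \eqref{gaV}. I expect this argument to run in close parallel to the proof of Theorem~\ref{thprincipal}(a) and Theorem~\ref{thprincipal3} for the genuine Euler scheme, with the simplification that no discretization-error terms (the only $\gamma$-dependent contributions there) are present for the true diffusion $X$.
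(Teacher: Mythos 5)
Your decomposition is, once unwound, exactly the one the paper uses. With $M_t=\ES[A^x_t\,|\,{\cal F}_t]$ sampled at times $kT$, the increment $M_{kT}-M_{(k-1)T}$ coincides algebraically with $\Delta M_{k,1}+\Delta M_{k,2}+\Delta M_{k,3}$ of Lemma~\ref{lemme1} (with $\usim{u}=u$ and $\xi$ replaced by $X$), your stochastic integral is $-\Delta M_{k,4}$, your It\^o boundary term $g_{_F}(X_t)-g_{_F}(X_0)$ is the (now $O(1)$) analogue of $\Theta_{n,2}$, and $A^x_t-M_t$ plays the role of $\Theta_{\cdot,3}$; the term $\Theta_{n,1}$ is indeed absent for the true diffusion. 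This last point is also why the growth condition~\eqref{gaV}, which you invoke in your ``moment budget'', is neither assumed in the statement nor needed: the paper notes explicitly that it only serves to control $\Theta_{n,1}$. The identification of the increment of ${\cal M}$ over $[T,2T]$ with the integrand of~\eqref{eq:sigmacarre}, the negligibility of the boundary terms, the Lindeberg condition via $L^{2(1+\delta)}$ bounds from $\sap$ with $p>2$, and the passage from $t=nT$ to general $t$ using the boundedness of $F$ all match the paper's argument.

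The one genuine gap is the convergence of the conditional quadratic variation. You condition $D_k^2$ on ${\cal F}_{(k-1)T}$ and correctly observe that the result is a functional of the whole path window $(X_u)_{u\in[(k-2)T,(k-1)T]}$ (because $D_k$ contains $\ES[A_{(k-1)T}\,|\,{\cal F}_{kT}]-\ES[A_{(k-1)T}\,|\,{\cal F}_{(k-1)T}]$, which reaches back to time $(k-2)T$), and you then appeal to a ``path-functional ergodic theorem'' for the true diffusion applied to an unbounded, polynomially growing functional. No such theorem is available here: \eqref{convarticle1} concerns the Euler scheme and bounded continuous functionals, and Lemma~\ref{lemme0}$(iii)$ is a purely marginal statement. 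The paper's resolution (Lemmas~\ref{lemme2} and~\ref{lemme3}) is to first replace $\ES_{k-1}[(\Delta M_k)^2]$ by $\ES_{k-2}[(\Delta M_k)^2]$ up to an $a.s.$ negligible Ces\`aro average (a Chow-theorem martingale argument): conditioning \emph{two} blocks back makes the entire increment measurable with respect to the post-$(k-2)T$ noise, so the Markov property turns $\ES_{k-2}[(\Delta M_k)^2]$ into $\Psi(X_{(k-2)T})$ for a fixed deterministic function $\Psi$, which is then shown to be continuous with $\Psi=O(V^a)$ so that the marginal ergodic theorem applies and yields $\nu(\Psi)=T\sigma^2_{_F}$. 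Without this extra conditioning step (or an independently proved path-functional law of large numbers with Lyapunov control, which would itself reduce to the same trick), the convergence of $\frac1n\sum_k\ES[D_k^2\,|\,{\cal F}_{(k-1)T}]$ for an arbitrary starting point $x$ is asserted rather than proved. The rest of your argument stands.
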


This means that our approach (averaging decreasing step schemes) induces no loss of weak rate of convergence with respect to that of the empirical mean of the process itself towards its steady regime. If we look at the problem from an algorithmic point of view, the situation becomes quite different. First, we will no longer discuss the recursive aspects as well as the possible storing  problems induced by the use of decreasing steps: it has already been done in~\cite{PP1} and we showed that they can easily be encompassed in practice, especially for additive  functionals 
or  functions of running extrema (see $e.g.$ simulations in Section~\ref{simulations}). 

\smallskip
Our aim here is to discuss the rate of convergence in terms of complexity. It is clear from its design that the complexity of the algorithm grows  linearly with the number of iterations. Thus, if $\gamma_n \propto n^{-\rho}$, $0<\rho< 1$,  then $\Gamma_n \sim \frac{n^{1-\rho}}{1-\rho}$  so that the effective rate of convergence as a function of the complexity is essentially proportional to $n^{\frac{1-\rho}{2}}$. However, the choice of $\rho$ is constrained by conditions~\eqref{condpas33} or~\eqref{condpas34} that are required for the control of the discretization error. These conditions imply that $\rho$ must be taken greater than 1/2 and lead to an ``optimal'' rate proportional to $n^{\frac{1}{4}-\varepsilon}$ for every $\varepsilon>0$. This means that we are not able to recover the  optimal rate  of the marginal case that is proportional to $n^{-1/3}$ and obtained for $\rho=1/3$ (see~\cite{LP2} for details). Indeed, in this functional framework, the weak discretization  error is generally smaller and thus, is negligible  compared to the long time error under a more constraining step condition~\eqref{condpas33} instead of $\sum\gamma_k^2/{\sqrt{\Gamma_n}}<+\infty$ in the marginal case).

\smallskip
The 
paper is organized as follows. In Sections~\ref{preliminaries},~\ref{sectionmartingale} and~\ref{Thetastudy}, we will focus on the proof of Theorem~\ref{thprincipal}$(a)$ and Theorem~\ref{thprincipal2} about the rate of convergence of the two considered occupation measures of the genuine Euler scheme. Then, in Section~\ref{PMT}, we will summarize the results of the previous sections and will give the main arguments of the proof of Theorems~\ref{thprincipal}$(a)$ and~\ref{thprincipal2}. Finally, Section~\ref{simulations} is devoted to  numerical tests in a financial framework: the pricing  of a barrier option when the underlying asset price dynamics is  a stationary stochastic volatility model. 

\section{Preliminaries}\label{preliminaries}
As for  the marginal rate of convergence (see~\cite{LP1}), the first idea is to find a good decomposition of the error (see Lemma~\ref{lemme1}). In particular, we have to exhibit a main martingale component. Here, since $F$ depends on the trajectory of the process between $0$ and $T$, the idea is that the ``good'' filtration for the main martingale component is 
$({\cal F}_{{kT}})_{k\ge0}$. That is why,  in the main part of the proof of these theorems, we will introduce and study the sequence of random
probabilities $({\cal P}^{(n,T)}(\omega,d\alpha))_{n\ge 1}$ defined  by:
$$
{\cal P}^{(n,T)}(\omega,d\beta)=\frac{1}{nT}\int_0^{nT}\delta_{\xi^{(\usim{u})}}(d\beta)du
=\frac{1}{nT}\sum_{k=1}^n\int_{(k-1)T}^{kT}\delta_{\xi^{(\usim{u})}}(d\beta)du
$$
where $\usim{u}$ is a deterministic real number lying in $[\un{u}, u]$.
 
\noindent To alleviate the notations, we will denote from now on,   ${\cal G}_k=\fkt$ and $\ES_k[\,.\,]=\ES[.\, |\,{\cal G}_k]$,  $k\ge0$.

At this stage, the reader can observe on the one hand  that for a bounded functional  $F$, ${\cal P}^{(n,T)}(\omega,F_{_T}) $ is  ${\cal G}_{n+1}={\cal F}_{(n+1)T}$-adapted for every $n\ge 0$ and on the other hand  that  ${\cal P}^{(n,T)}(\omega,F_{_T})$ is very close  to the random measures $\bar{\nu}^{(n)}(\xi(\omega),d\beta)$ of Theorem~\ref{thprincipal}(b) by taking $\usim{u}= \underline{u}\vee [u]$ and exactly equal to its continuous time counterpart in Theorem~\ref{thprincipal3}  if one sets  $\usim{u}=u$.   (This fact will be made more precise in Section~\ref{PMT}). 
 
Hence, the main step of the proof of the above theorems will be to study the rate of convergence of the sequence $({\cal P}^{(n,T)}(\omega,F_{_T}))_{n\ge0}$ to $\PE_\nu(F_{_T})$ for which the main result is given in Section~\ref{PMT} (see Proposition~\ref{proppnt}). 
In this way, we state in this section a series of preliminary lemmas.  In Lemma~\ref{lemme1}, we decompose the error between this new sequence $({\cal P}^{(n,T)}(\omega,F_{_T}))_{n\ge 1}$ and the target $\PE_\nu(F_{_T})$. In Lemma~\ref{lemmem1}, we recall a series of results on the
stability of diffusion processes and their genuine Euler scheme in finite horizon. Finally, in Lemma~\ref{lemme0}, we recall and extend  results of~\cite{LP2} about  the long-time behavior of the marginal Euler scheme.

For every $k\in\EN$, we define the ${\cal G}_k$-measurable random variable $\phi_{_F}(k)$ by 
\begin{equation}\label{phik}
\phi_{_F}(1)  = 0,\quad \phi_{_F}(k)= 
\int_{I_{k-1}} F_{_T}(\xi^{(\usim{u})})du\quad \textnormal{if $k\ge 2$.}
\end{equation}
where $I_{k}=[(k-1)T,kT)$. Please note that $\phi_{_F}(k)$ is $\fkt$-measurable.
\begin{lemme}\label{lemme1}
For every  $F$ satisfying $\cunf$ and $\cdeuxf$, we have
\begin{align*}
&\pnt(\omega,F_{_T})-\PE_{\nu}(F_{_T})=\frac{M_n}{nT}+\frac{\Theta_{n,1}+\Theta_{n,2}+\Theta_{n+1,3}}{nT}
\end{align*}
where

\smallskip 
\noindent $(M_n)_{n\ge1}$ is a $(\gn)$-martingale decomposed as follows : $\displaystyle{M_n=\sum_{i=1}^4 M_{n,i}}$ with
\begin{align*}
&\Delta M_{k,1}= \phi_{_F}(k)-\ES_{k-1}[\phi_{_F}(k)],& \Delta M_{k,2}= \ES_k[\phi_{_F}(k+1)]-\ES_{k-1}[\phi_{_F}(k+1)],\\
& \Delta M_{k,3}= \int_{I_k}\ES_{k-1}[F_{_T}(X^{({\usim{u}}),\xi_{{{\usim{u}}}}})]-f_{_F}(\xi_{{\usim{u}}})du,&\Delta M_{k,4}= -\int_{I_k}\psg \nabla g_{_F}(\xi_{{\usim{u}}}),\sigma(\xi_{{\usim{u}}}) dW_u\psd
\end{align*}
and $(\Theta_{n,1})$, $(\Theta_{n,2})$ and $(\Theta_{n,3})$ are $(\gn)$-adapted sequences defined for every $n\ge1$, by:
\begin{align*}
&\Theta_{n,1}=\sum_{k=1}^n\int_{I_k}\ES_{k-1}\left[F_{_T}({\xi}^{({\usim{u}})})-F_{_T}(X^{({\usim{u}}),\xi_{{{\usim{u}}}}})\right]du,\\
&\Theta_{n,2}=\sum_{k=1}^n \left(\int_{I_k}{\cal A}g_{_F}({\xi}_{{\usim{u}}})du-\Delta M_{k,4}\right),\\\
&\Theta_{n,3}=\left(\phi_{n}(F)-\ES_{n}(\phi_{n}(F))\right).
\end{align*}
\end{lemme}

\begin{proof}
With our newly defined notations, we have, for every $n\ge 1$, 
\[
\pnt(\omega,F_{_T})
=\frac{1}{nT} \sum_{k=1}^n \phi_{_F}(k+1).
\]
Now, for every $k\ge 1$, going twice backward through martingale increments, one checks that
 \[
  \phi_{_F}(k+1)= \Delta M^1_{k+1} +\Delta M^2_k +\ES_{k-1} (\phi_{_F}(k+1)).
 \]
Then, noting that $\displaystyle \ES_{k-1} (\phi_{_F}(k+1))=\int_{I_k} \ES_{k-1}(F_{_T}({\xi}^{({\usim{u}})}))du$, we introduce the approximation term $\Delta \Theta_{n,1}$ between the genuine Euler scheme $\xi$ and the true diffusion $X$  so that 
\[
\phi_{_F}(k+1)= \Delta M^1_{k+1} +\Delta M^2_k +\Delta \Theta_{k,1}+ \int_{I_k} \ES_{k-1}(F_{_T}(X^{({\usim{u}}),\xi_{{{\usim{u}}}}}))du.
\]
At this stage the Markov property applied to the original diffusion process yields
\[
 \ES_{k}(F_{_T}(X^{({\usim{u}},\xi_{{{\usim{u}}}})})) =  \ES_{k}\big( \ES_{ \usim{u}} F_{_T}(X^{({\usim{u}}),\xi_{{{\usim{u}}}}})\big)= \ES_{k}f_{_F}(\xi_{\usim{u}})=f_{_F}(\xi_{\usim{u}})
\]
since $\usim{u}\le u\le kT$. As a consequence, $\Delta M^3_k$ is a true ${\cal G}_k$-martingale increment and
\[
\phi_{_F}(k+1)= \Delta M^1_{k+1} +\Delta M^2_k +\Delta \Theta_{k,1}+\Delta M^3_k + \int_{I_k}  f_{_F}(\xi_{\usim{u}})du
\]
  On the other hand $f_{_F} = {\cal A}g_{_F} +\PE_{\nu}(F)$, so that
\begin{eqnarray*}
\int_{I_k}  f_{_F}(\xi_{\usim{u}})du-\PE_{\nu}(F)\, = \,\int_{I_k}   {\cal A}g_{_F} (\xi_{\usim{u}})du\,=\, \Delta \Theta_{n,2}+\Delta M^4_k.
\end{eqnarray*}
Finally, summing up all these terms yields
\begin{eqnarray*}
\pnt(\omega,F_{_T})-\PE_{\nu}(F) &=& \frac{1}{nT}\left(M^1_{n+1} +\sum_{i=2}^4M^i_n +\sum_{i=1}^2  \Theta_{n,i}\right) \\\\
&=&  \frac{1}{nT}\left(\sum_{i=1}^4 M^i_n +\sum_{i=1}^3\Theta_{n,i}\right)
\end{eqnarray*}
since $\Theta_{n+1,3}= M^1_{n+1}-M^1_n$.
 \end{proof}
 
\noindent \begin{Remarque} The term $\Theta_{n,1}$ sums up the error resulting from the approximation of $X^{({\usim{u}}),\xi_{{{\usim{u}}}}}$ by its Euler scheme (with decreasing step) $\xi_{\usim{u}+.}$. The term $\Theta_{n,2}$ is a residual approximation term as well: indeed, if we replace {\em mutatis mutandis} $\xi_{\usim{u}}$ by $X_u$,  It\^o's formula implies that
\[
g_{_F}(X_{(k+1)T})-g_{_F}(X_{kT})=\int_{I_k} {\cal A}g_{_F}(X_u)du +\int_{I_k}\psg \nabla g_{_F}(X_{u}),\sigma(X_{u}) dW_u\psd, 
\]
so that the resulting term would be, instead of $\Theta_{n,2}$,  $\frac{g_{_F}(X_{(n+1)T})-g_{_F}(X_{nT})}{nT} = O(1/n)$.
 \end{Remarque}
\begin{lemme}\label{lemmem1}  Let $p>0$ and  $T>0$. Assume that $b$ and $\sigma$ are Lipschitz continuous functions and that there exists  $\phi\!\in{\cal E}\!{\cal Q}(\ER^d)$ such that $|b|^2+\|\sigma\|^2\le C_{b,\sigma} \phi$ for a positive real constant $C_{b,\sigma}$. Then,

\medskip 
\noindent $(i)$  There exists a real constant $C_{p,T, b,\sigma}>0$, such that for every $u\ge 0$  and every finite ${\cal F}_u$-measurable random vector $\Xi$
$$
\ES[\sup_{t\in[0,T]} \phi^p(X_{t}^{(u),\Xi})\,|\, {\cal F}_u]\le C_{p,T, b,\sigma}\phi^p(\Xi)\quad\textnormal{and}\quad\ES[\sup_{t\in[0,T]} \phi^p(\xi_{u+t})\,|\, {\cal F}_u]\le C_{p,T, b,\sigma}\phi^p(\xi_{u}).
$$

\noindent $(ii)$  There exists a real constant $C_{p,T}>0$ such that, for every $u\ge0$,
$$
\ES[\sup_{t\in[0,T]}|\xi_{u+t}-X^{(u),\xi_u}_{t}|^p\,|\, {\cal F}_u]\le C_{p,T}(1+|\xi_u|^p)\gamma_{N(u)+1}^\frac{p}{2}.
$$

\noindent $(iii)$ There exists a real constant $C_{p}>0$ such that, for every $n\ge0$,
$$
\ES[\sup_{u\in[\Gamma_n,\Gamma_{n+1})} |\xi_u-\xi_{_{\Gamma_n}}|^p|{\cal F}_{\Gamma_n}]
\le C_{p} \phi^{\frac{p}{2}}(\xi_{_{\Gamma_n}})\gamma_{n+1}^{\frac{p}{2}}.
$$

\noindent $(iv)$  Let $p>2$. Then, there exists $C_{p,T,\delta}>0$ such that, for every $u\ge0$,

$$
\ES[\sup_{t\in[0,T]}|\xi_{u+t}-\xi_{\underline{u+t}}|^p\,|\, {{\cal F}_{\underline{u}}}]\le C_{p,T}\phi^{\frac{p}{2}}(\xi_{\underline{u}})\gamma_{N(u)+1}^{\frac{p}{2}-1}.
$$
\end{lemme}

\begin{proof} The  proofs follow the lines of their classical counterpart for the constant step Euler scheme of a diffusion (see $e.g.$ \cite{BOLE}, Theorem B.1.4 p.276 and the remark that follows). In particular, as concerns $(ii)$, the only thing to be checked  is that $(\xi_{u+t})_{t\ge 0}$ is the Euler scheme with decreasing step $\gamma^{(u)}$ of $X^{(u),\xi_u}$ where the step sequence $\gamma^{(u)}$ is defined by
\begin{equation}\label{gammau}
\gamma^{(u)}_1 = \Gamma_{N(u)+1}-u,\; \gamma^{(u)}_k= \gamma_{N(u)+k},\; k\ge 2.
\end{equation}
\end{proof}
\begin{lemme}\label{lemme0} Let $p>2$ and $a\in(0,1]$ such that $\mathbf{(S_{a,p})}$ holds and assume that $b$ and $\sigma$ are Lipschitz continuous functions. 

\smallskip
\noindent $(i)$ Let $g:\ER_+\rightarrow\ER_+$ be a nonincreasing function such that $\int_0^\infty g(u)du<+\infty$. Let $(\delta_k)$ be a nonincreasing sequence of positive numbers such that $\sum_{k\ge1}\delta_k<+\infty$. Then, $a.s.$, 
\begin{align}\label{01}
&\int_0^{+\infty} \ES[V^{p+a-1}(\xi_{\underline{u}})] g(u)du<+\infty \quad \textnormal{and}\quad\sum_{k\ge1}\delta_k \ES[V^{{p}+a-1}(\xi_{(k-1)T})]<+\infty.
\end{align}

\smallskip
\noindent $(ii)$ We have: 
\begin{equation}\label{lyapcontrol1}
\sup_{t\ge\Gamma_1}\frac{1}{t}\int_0^t V^{\frac{p}{2}+a-1}(\xi_{\un{s}})ds<+\infty\quad a.s.
\end{equation}
and
\begin{equation}\label{lyapcontrol2}
\sup_{n\ge1}\frac{1}{n}\sum_{k=1}^n V^{\frac{p}{2}+a-1}(\xi_{(k-1)T})<+\infty\quad a.s.
\end{equation}
In particular, the families of empirical measures $\big(\frac 1t \int_0^t\delta_{\xi_{\un{s}}}ds\big)_{t\ge1}$ and $\big(\frac 1n\sum_{k=1}^n{\delta_{\xi_{(k-1)T}}}\big)_{n\ge1}$ are $a.s.$ tight.

\smallskip
\noindent $(iii)$  Assume $\mathbf{(S^\nu_T)}$. Then, $a.s.$, for every continuous function $f$ such that $f(x)=o(V^{\frac{p}{2}+a-1}(x))$ as $|x|\rightarrow+\infty$, 
$$\frac{1}{t}\int_0^t f(\xi_{\un{s}})ds\xrn{t\nrn}\nu(f)\quad\textnormal{and}\quad\frac{1}{n}\sum_{k=1}^n f(\xi_{(k-1)T})\xrn{t\nrn}\nu(f).$$
\end{lemme}


\begin{proof}  $(i)$ First, note that 
$$\int_0^\infty V^{p+a-1}(\xi_{\underline{u}}) g({u})du=\sum_{n\ge1}\theta_n\gamma_nV^{p+a-1}(\xi_{\Gamma_{n-1}}),$$
where $\theta_n=\gamma_{n}^{-1}\int_{\Gamma_{n-1}}^{\Gamma_n}g(u)du$. 
Consequently, the first statement is simply a rewriting with  continuous time notations 
of  Lemma 4 of~\cite{LP2}. As concerns the second one, using Lemma~\ref{lemmem1}$(i)$ with $\phi=V$ and the exponent $^{p+a-1}$ yields  for every $k\ge1$ and  every  $u\in I_k$:
$$
\ES[V^{p+a-1}(\xi_{kT})]\le C_{p,a,T}\ES[V^{p+a-1}(\xi_{\underline{u}})].
$$
As a consequence, considering the integrable, nonincreasing, nonnegative function  $g= \sum_{k\ge 1} \mbox{\bf 1}_{I_{k-1}}\delta_k$ leads to
$$
\sum_{k\ge2}\delta_k \ES[V^{p+a-1}(\xi_{(k-1)T})]\le C_{p,a,T}\sum_{k\ge2}\int_{I_{k-1}}\ES[V^{p+a-1}(\xi_{\underline{u}})] g(u)du<+\infty
$$
owing to the previous statement.

\smallskip
\noindent $(ii)$ Set $r=\frac p2+a-1>0$ since $p>2$ and $a>0$. First, for every $n\ge1$ and  every $t\in[ \Gamma_n,\Gamma_{n+1})$,
$$
\frac{1}{t}\int_0^t V^{r}(\xi_{\underline{s}})ds\le \frac{\Gamma_{n+1}}{\Gamma_n}\frac{1}{\Gamma_{n+1}}\sum_{k=1}^{n+1} \gamma_k V^r(\xi_{_{\Gamma_{k-1}}})
\le \frac{2}{\Gamma_{n+1}}\sum_{k=1}^{n+1} \gamma_k V^r(\xi_{_{\Gamma_{k-1}}}),
$$
since $\gamma_n$ is nonincreasing. Now, owing to Proposition~\ref{funcpremiere1}, 

$$
\sup_{n\ge1}\frac{1}{\Gamma_n}\sum_{k=1}^n \gamma_k V^r(\xi_{_{\Gamma_{k-1}}})<+\infty\quad a.s.
$$
\noindent and~\eqref{lyapcontrol1} follows.

\noindent Let us deal now with~\eqref{lyapcontrol2}.
Given~\eqref{lyapcontrol1}, it is clear that~\eqref{lyapcontrol2} is equivalent to showing that for an increasing sequence $(t_k)$ such that
$t_0=0$, $\sup_{k\ge1} (t_k-t_{k-1})<+\infty$ and $t_k\rightarrow+\infty$,
\begin{equation}\label{116}
\sup_{n\ge1}\frac{1}{n}\sum_{k=1}^n \left((t_k-t_{k-1})V^r(\xi_{kT})-\int_{t_{k-1}}^{t_k}V^r(\xi_{\un{u}})du\right)<
+\infty\quad a.s.
\end{equation}
Setting $t_k=\Gamma_{N(kT)+1}$ for every $k\ge1$, this suggests to introduce the martingale defined by $N_0=0$ and for every $n\ge1$,
$$
N_n=\sum_{k=1}^n \frac{1}{k}\left(\int_{t_{k-1}}^{t_k}V^r(\xi_{kT})-V^r(\xi_{\un{u}})du-\ES_{t_{k-1}}\left[\int_{t_{k-1}}^{t_k}V^r(\xi_{kT})-V^r(\xi_{\un{u}})du\right]\right),\; 
$$ 
where  $\ES_{t_k}[\,.\,]:=\ES[\,.\,|{\cal F}_{t_{k}}].$  
Set $\varepsilon=\frac{p}{2r}$ so that $(1+\varepsilon)r= p+a-1$. Using that $\sup_{k\ge1} (t_k-t_{k-1})<+\infty$ and the elementary inequality
$|u+v|^{1+\varepsilon}\le 2^\varepsilon(u^{1+\varepsilon}+v^{1+\varepsilon})$ for  $u$, $v\ge 0$, 
\begin{align*}
\sum_{k\ge1}\frac{1}{k^{1+\varepsilon}} \ES \Big|&\int_{t_{k-1}}^{t_k}V^r(\xi_{kT})-V^r(\xi_{\un{u}})du\Big|^{1+\varepsilon}\\& \le C\sum_{k\ge1}\delta_k\ES[V^{r(1+\varepsilon)}(\xi_{kT})]+C\int_{0}^{+\infty}\ES[V^{r(1+\varepsilon)}(\xi_{\un{u}})] g(u)du,
\end{align*}
where $\delta_k= k^{-(1+\varepsilon)}$ and $g$ is the nonincreasing function defined by  $g(u)=k^{-(1+\varepsilon)}$ on $[t_{k-1},t_k)$. Thus, we deduce from~\eqref{01}  that,
\begin{align*}
\sum_{k\ge1}\frac{1}{k^{1+\varepsilon}} \ES \Big|\int_{t_{k-1}}^{t_k}V^r(\xi_{kT})-V^r(\xi_{\un{u}})du\Big|^{1+\varepsilon} <+\infty.
\end{align*}
It follows from the Chow Theorem (see $e.g.$~\cite{hall}) that $(N_n)$ $a.s.$ converges toward a finite random variable $N_{\infty}$ which in turn implies by  the Kronecker Lemma that 
$$
\frac{1}{n}\sum_{k=1}^n\left(\int_{t_{k-1}}^{t_k}V^r(\xi_{kT})-V^r(\xi_{\un{u}})du-\ES_{t_{k-1}}\left[\int_{t_{k-1}}^{t_k}V^r(\xi_{kT})-V^r(\xi_{\un{u}})du\right]\right)\xrightarrow{n\rightarrow+\infty}0\quad a.s.
$$ 
Then,  ~\eqref{116} will follow from
\begin{equation}\label{eq:118}
\sup_{n\ge1}\frac{1}{n}\sum_{k=1}^n \int_{t_{k-1}}^{t_k}\ES_{t_{k-1}}\left[V^r(\xi_{kT})-V^r(\xi_{\un{u}})\right]du<+\infty\quad a.s.
\end{equation}
In order to prove~\eqref{eq:118},  we need to inspect two cases for $r$:

\smallskip
\noindent Case  \fbox{$r\ge1$.} We decompose the increment $V^r(\xi_{kT})-V^r(\xi_{\un{u}})$ into  elementary increments, namely
 $$
 V^r(\xi_{kT})-V^r(\xi_{\un{u}})=V^r(\xi_{kT})-V^r(\xi_{\underline{kT}})+\sum_{\ell=N(\underline{u})+1}^{N(kT)}V^r(\xi_{_{\Gamma_\ell}})-V^r(\xi_{_{\Gamma_{\ell-1}}}).
 $$
Owing to the second order Taylor formula, we have for every $\ell\!\in \{N(\underline{u})+1,\ldots,N(kT)\}$:
 \begin{eqnarray*}
 V^r(\xi_{_{\Gamma_\ell}})- V^r(\xi_{_{\Gamma_{\ell-1}}})&=&\gamma_l \psg \nabla V^r,b\psd (\xi_{_{\Gamma_{\ell-1}}})+\psg \nabla V^r(\xi_{_{\Gamma_{\ell-1}}}),\sigma
 (\xi_{_{\Gamma_{\ell-1}}})(W_{\Gamma_{\ell}}-W_{\Gamma_{\ell-1}})\psd\\&&+\frac{1}{2} D^2 V^r(\theta_l)(\xi_{_{\Gamma_\ell}}-\xi_{_{\Gamma_{\ell-1}}})^{\otimes 2}\quad \mbox{where $\theta_l\in(\xi_{_{\Gamma_{\ell-1}}},\xi_{_{\Gamma_\ell}})$}.
 \end{eqnarray*}
 
  Note that a similar development holds for $V^r(\xi_{kT})-V^r(\xi_{\underline{kT}})$.
 Now, one checks that the fact that $V\!\in{\cal E}\!{\cal Q}(\ER^d)$ implies that $\|D^2 V^r\|\le C_{_V}V^{r-1}$ and that 
 $\sqrt V$ is a Lipschitz continuous function with Lipschitz constant  $[\sqrt{V}]_1$. Consequently
 \begin{eqnarray*}
 |D^2 V(\theta_{\ell})(\xi_{_{\Gamma_\ell}}-\xi_{_{\Gamma_{\ell-1}}})^{\otimes 2}|&\le& C_{_V} \big(\sqrt{V(\xi_{_{\Gamma_{\ell-1}}})}+[\sqrt{V}]_1 |\xi_{_{\Gamma_\ell}}-\xi_{_{\Gamma_{\ell-1}}}|\big)^{2(r-1)}|\xi_{_{\Gamma_\ell}}-\xi_{_{\Gamma_{\ell-1}}} |^2\\
 &\le &C_{_{r,V}} V^{r-1}(\xi_{_{\Gamma_{\ell-1}}})|\xi_{_{\Gamma_\ell}}-\xi_{_{\Gamma_{\ell-1}}} |^2+ C|\xi_{_{\Gamma_\ell}}-\xi_{_{\Gamma_{\ell-1}}}|^{2r},
 \end{eqnarray*}
 where we used  in the second inequality the standard control $|u+v|^s\le 2^{s-1}(|u|^s+|v|^s)$.
Then, summing over $\ell$ and using that $\psg\nabla V,b\psd \le \beta$ owing to $ \mathbf{(S_{a,p})}(ii)$, we deduce that 
 \begin{eqnarray*}
V^r(\xi_{kT})-V^r(\xi_{\un{u}})&\le & \beta (kT-\un{u})+\int_{\un{u}}^{kT} \psg \nabla V^r(\xi_{\un{{v}}}),\sigma
 (\xi_{\un{{v}}})dW_{v}\psd\\
 &&+C_{_V}\int_{\un{u}}^{kT} V^{r-1}(\xi_{\un{{v}}})|\xi_{\bar{{v}}\wedge kT}-\xi_{\un{{v}}}|^2+ |\xi_{\bar{{v}}\wedge kT}-\xi_{\un{{v}}}|^{2r}\frac{d{v}}{\gamma_{{N(v)+1}}}
 \end{eqnarray*}
where $\bar{{v}}=\Gamma_{N({v})+1}$. By $\mathbf{(S_{a,p})}(i)$, we can use Lemma~\ref{lemmem1}$(iii)$ with $\phi=V^a$ and $p=s$  to obtain for every $s>0$, 
\begin{equation}\label{IneqTechXi}
\mathbb{E}_{\un{{v}}} [|\xi_{\bar{{v}}\wedge kT}-\xi_{\un{{v}}}|^s ]\le C_{s} V^{\frac{as}{2}}(\xi_{\un{{v}}})\gamma_{N({v})+1}^{s/2}.
\end{equation}
Applying successively the above inequality with $s=2$ and $s=2r\ge 2$ and using the chain rule for conditional expectations show that,
 \begin{eqnarray*}
\mathbb{E}_{t_{k-1}}[V^r(\xi_{kT})-V^r(\xi_{\un{u}})]&\le& \beta (T+\|\gamma\|_{\infty})+\ES_{{t_{k-1}}}\left[\int_{\un{u}}^{kT} V^{r+a-1}(\xi_{\un{v}})dv\right]\\
&\le& C_{T,\beta, \|\gamma\|_{\infty}}\left(1+\ES_{t_{k-1}}\left[\int_{t_{k-1}}^{t_k} V^{r+a-1}(\xi_{\un{u}})du\right]\right) 
 \end{eqnarray*}
for some real constant $C_{T,\beta \|\gamma\|_{\infty}}$.  As a consequence, 
\begin{equation*}
\sup_{n\ge1}\frac{1}{n}\sum_{k=1}^n \int_{t_{k-1}}^{t_k}\ES_{t_{k-1}}\left[V^r(\xi_{kT})-V^r(\xi_{\un{u}})\right]du
\le  C\left(1+\sup_{n\ge1}\frac{1}{n}\sum_{k=1}^n\ES_{t_{k-1}}\left[\int_{t_{k-1}}^{t_k} V^{r+a-1}(\xi_{\un{u}})du\right]\right).
\end{equation*}
Let $\varepsilon\!\in(0,\frac{p+a-1}{r+a-1})$ (note that $\frac{p+a-1}{ r+a-1}= \frac{\frac p2 -(a-1)}{\frac p2+2(a-1)}>0$ since $p>2$ and $0<a\le 1$). Hence $(1+\varepsilon)(r+a-1)\le p+a-1$ and by Lemma~\ref{lemmem1}$(i)$ and~\eqref{01}, one checks  that
\begin{align*}
\sum_{k=1}^{+\infty}\frac{1}{k^{1+\varepsilon}}\ES_{t_{k-1}}\Big|\ES_{t_{k-1}}\Big[\int_{t_{k-1}}^{t_k} &V^{r+a-1}(\xi_{\un{u}})du\Big]-\int_{t_{k-1}}^{t_k} V^{r+a-1}(\xi_{\un{u}})du\Big|^{1+\varepsilon}\\&\le C \sum_{k=1}^{+\infty}\frac{1}{k^{1+\varepsilon}}\int_{t_{k-1}}^{t_k}\ES_{t_{k-1}}\left[ V^{p+a-1}(\xi_{\un{u}})\right]du<+\infty\quad a.s.
\end{align*}
by the first part of the lemma. Then, one derives  using a  martingale argument  based on~\eqref{lyapcontrol1}, the Chow Theorem  and the Kronecker Lemma  that
\begin{equation*}
\sup_{n\ge1}\frac{1}{n}\sum_{k=1}^n\ES_{t_{k-1}}\left[\int_{t_{k-1}}^{t_k} V^{r+a-1}(\xi_{\un{u}})du\right]<+\infty.
\end{equation*}
 
\noindent Case \fbox{$0<r\le 1$.} In that case, we just use that $D^2V^r$ is bounded so that we just have  to use~(\ref{IneqTechXi}) with $s=2$ (since $a<p+a-1$).  This concludes the proof of $(ii)$.

\smallskip
\noindent $(iii)$ The fact that $a.s.$, $\displaystyle{\frac{1}{t}\int_0^t f(\xi_{\un{s}})ds\xrightarrow{t\rightarrow+\infty}\nu(f)}$ is but the statement of Proposition~\ref{funcpremiere1} with continuous time notations. Now, let us show that $a.s.$, for every continuous function $f$ such that $f=o(V^{\frac{p}{2}+a-1})$,
\begin{equation}\label{112}
\frac{1}{n}\sum_{k=1}^n f(\xi_{(k-1)T})\xrn{n\nrn}\nu(f).
\end{equation}
First, taking advantage of~\eqref{lyapcontrol2}, standard weak convergence arguments based on  uniform integrability  show that it is enough to prove that, $a.s.$,~\eqref{112}
holds  for every bounded continuous function $f$. Then, using that weak convergence on $\ER^d$ can be characterized along a countable subset ${\cal S}$ of Lipschitz bounded continuous functions $f$, the problem amounts to showing   that for every  Lipschitz bounded continuous function $f:\ER^d\rightarrow\ER$
\begin{equation}\label{114}
\frac{1}{n}\sum_{k=1}^n f(\xi_{(k-1)T})\xrn{n\nrn}\nu(f)\quad a.s.
\end{equation}
Owing to $\mathbf{(S^\nu_T)}$, our strategy here will be to show that almost any    limiting distribution of the empirical measures is invariant since it leaves the transition operator $P_{_T}$ invariant. As a first step, we first derive from a standard martingale argument that 
\begin{equation}\label{114}
\frac{1}{n}\sum_{k=2}^n f(\xi_{(k-1)T})-\ES_{k-2}[f(\xi_{(k-1)T})] \xrn{n\nrn}0\quad a.s.
\end{equation}
Now, we remark that
\begin{eqnarray}
\ES_{k-2}[f(\xi_{(k-1)T})]&=& P_{_T} f(\xi_{(k-2)T})+R_{k-2}(\xi_{(k-2)T})\qquad\label{1145}\\
\quad \textnormal{with }\hskip 4 cm  R_{k}(x)&=& \ES[f(\xi_T^{x,\gamma^{(kT)}})-f(X_T^{x})],\hskip 3,5 cm
\end{eqnarray}
where $\xi^{x,\gamma^{(kT)}}$ denotes the genuine Euler scheme starting from $x$ with step sequence $\gamma^{(kT)}$ defined by~\eqref{gammau}.
Since $f$ is bounded Lipschitz,
\begin{equation*}
R_{k}(x)\le C \ES[|\xi_T^{x,\gamma^{(kT)}}-X_T^{x}|]\mbox{\bf 1}_{\{|x|\le M\}}+2\|f\|_\infty \mbox{\bf 1}_{\{|x|>M\}}
\le C_M\sqrt{\gamma_{N(k)}}+ 2\|f\|_\infty \mbox{\bf 1}_{\{|x|>M\}},
\end{equation*}
where in the second inequality, we used Lemma~\ref{lemmem1}$(ii)$ with $p=1$. Thus, since $\gamma_{N(k)}\xrn{k\nrn}0$, it follows from~\eqref{1145} that, 
for every $M>0$,
\begin{equation*}
\limsup_{n\rightarrow+\infty}\frac{1}{n}\sum_{k=2}^n \left(\ES_{k-2}[f(\xi_{(k-1)T})]-P_{_T}[f(\xi_{(k-2)T})]\right)\le 
C\limsup_{n\rightarrow+\infty}\frac{1}{n}\sum_{k=2}^n\mbox{\bf 1}_{B(0,M)^c}(\xi_{(k-1)T})\quad a.s.
\end{equation*}
Then, it follows from~\eqref{114} and from the a.s. tightness of $\displaystyle \big(\frac{1}{n}\sum_{k=1}^n\delta_{\xi_{(k-1)T}}\big )_{n\ge1}$ that, $a.s.$,
\begin{equation*}
\frac{1}{n}\sum_{k=1}^n \left(f(\xi_{(k-1)T})-P_{_T}f(\xi_{(k-1)T})\right)=\frac{1}{n}\sum_{k=2}^n \left(f(\xi_{(k-1)T})-P_{_T}f(\xi_{(k-2)T})\right)+O(\frac{1}{n})\xrn{n\nrn}0.
\end{equation*}
Now, since $f$ and $P_{_T} f$ are bounded continuous, it follows that, $a.s.$, for every weak limit $\nu_\infty(\omega,dx)$ of the tight sequence $(n^{-1}\sum_{k=1}^n\delta_{\xi_{(k-1)T}})_{n\ge1}$, $\nu_\infty(\omega,f)=\nu_\infty(\omega,P_{_T} f)$ for every $f\in{\cal S}$. This implies that $\nu_\infty(\omega,dx)$ is an invariant distribution for $P_{_T}$  and one concludes the proof  by $\mathbf{(S^\nu_T)}$.
\end{proof} 

\section{Rate of convergence for the martingale component}\label{sectionmartingale}
This section is devoted to the study of the rate of convergence of the martingale $(M_n)$ defined in Lemma~\ref{lemme1}.
The main result of this section is Proposition~\ref{propTCL} where we obtain a $CLT$ for this martingale. On the way to this result, the main difficulty is to study the asymptotic behavior of the previsible bracket of this sum of four dependent martingales. First, 
we decompose the martingale increment $\Delta M_n$ as follows:
\begin{align*}
\Delta M_n=\ES_n[\bar{A}_{n+1}+\bar{B}_n]-\ES_{n-1}[\bar{A}_{n+1}+\bar{B}_n],
\end{align*}
where $(\bar{A}_n)$ is a $(\gn)$-adapted sequence defined for every $n\ge1$ by:
\begin{align*}
&\bar{A}_{n}=\phi_{n-1}+\phi_{n}-\int_{I_{n-1}} f_{_F}(\xi_{\usim{u}})du=\int_{(n-3)T}^{(n-1)T} F_{_T}({\xi}^{(\usim{u})})du-\int_{I_{n-1}} f_{_F}(\xi_{\usim{u}})du,
\end{align*}
and $\bar{B}_n=\Delta M_{n,4}$. Keep in mind that $\ES_{n-1}[\bar{B}_n]=0$. In the following lemma,
we set
$$
Z^k:=X^{(kT),\xi_{kT}}\quad\forall k\ge1,
$$ 
where, following the notation introduced in~\eqref{equationshiftee}, $X^{(kT),\xi_{kT}}$ denotes the unique solution to $dY_t=b(Y_t)dt+\sigma(Y_t)dW^{(kT)}$ starting from $\xi_{kT}$.
\begin{lemme}\label{lemme2} Assume $b$ and $\sigma$  are Lipschitz continuous functions 
satisfying  $\mathbf{(S_{a,p})}$ with an essentially quadratic Lyapunov function  $V$ and parameters $a\in(0,1]$ and $p>2$.
Let $F:\mathbb{D}_{uc}([0,T],\ER^d)\to \ER$ denote a functional satisfying $\mathbf{(C_F^1)}$ and $\mathbf{(C_F^2)}$. Then,
\begin{equation}\label{Eq51}
\frac{1}{n}\sum_{k=2}^n\ES_{k-1}[(\Delta M_k)^2]-\ES_{k-2}[(\Delta M_{k})^2]\xrn{n\nrn}0\quad a.s.
\end{equation}
and 
\begin{equation}\label{Eq52}
\frac{1}{n}\sum_{k=2}^n\left(\ES_{k-2}[(\Delta M_k)^2]-\left(\ES_{k-2}[(\ES_k C_{k+1})^2]-\ES_{k-2}[(\ES_{k-1} C_{k+1})^2]\right)\right)\xrn{n\nrn}0\quad a.s.
\end{equation}
where $C_{k+1}=A_{k+1}+B_k$ with
\begin{align*}
&A_{k+1}=\int_{0}^{2T}F_{_T}(Z^{k-2}_{u+.})du-\int_T^{2T} f_{_F}(Z^{k-2}_u)du\\
 \hbox{and }\hskip 3,1 cm  &B_k=-\int_0^T \psg \nabla g_{_F}(Z^{k-2}_u),\sigma(Z^{k-2}_u)dW_u^{(k-2)T}\psd.\hskip 3,3 cm 
\end{align*}

\end{lemme}
\begin{proof}  We consider  the $({\cal G}_{n-1})$-martingale $(N_n)$ defined by: 
$$
N_n:=\sum_{k=2}^{n}\frac{1}{k}\left(\ES_{k-1}[(\Delta M_k)^2]-\ES_{k-2}[(\Delta M_{k})^2]\right).
$$
Let $\varepsilon>0$. Using Jensen's inequality, we have 
\begin{eqnarray*}
\sum_{k\ge2}\ES_{k-2}\left|\Delta N_k\right|^{1+\varepsilon}
\le C \sum_{k\ge 2}\frac{1}{k^{1+\varepsilon}}\ES_{k-2}\left|\Delta M_k\right|^{2(1+\varepsilon)}\le C\sum_{k\ge 2}\frac{1}{k^{1+\varepsilon}} \ES_{k-2}\left|\bar{A}_{k+1}+\bar{B}_k\right|^{2(1+\varepsilon)}.
\end{eqnarray*}
Using successively conditional Burkh\"older-Davis-Gundy and Jensen inequalities and $\cunf$, we have 
\begin{eqnarray}\nonumber
\ES_{k-2}\left|\bar{A}_{k+1}+\bar{B}_k\right|^{2(1+\varepsilon)}&\le&3^{1+2\varepsilon}\left((2\|F\|_{\infty} T)^{2(1+\varepsilon)}+(\|F\|_{\infty} T)^{2(1+\varepsilon)}\right.\\
&&\left.+T^\varepsilon\int_{I_k}\ES_{k-2}\left[|\nabla g_{_F}(\xi_{\usim{u}})|^{2(1+\varepsilon)}\|\sigma(\xi_{\usim{u}})\|^{2(1+\varepsilon)}\right]du\right).\label{bk1}
\end{eqnarray}
Now, since $\nabla g_{_F}$ is bounded and $\|\sigma\|^2\le CV^a$,
\begin{eqnarray}\nonumber
\ES_{k-2}\left[|\nabla g_{_F}(\xi_{\usim{u}})|^{2(1+\varepsilon)}\|\sigma(\xi_{\usim{u}})\|^{2(1+\varepsilon)}\right]&\le &C\ES_{k-2}[V^{a(1+\varepsilon)}(\xi_{\usim{u}})] \\
&\le& C (1+ \bar{G}_{k-2,a(1+\varepsilon)}({\xi}_{(k-2)T}) )   \label{bk2}
\end{eqnarray}
where $\displaystyle{\bar{G}_{k,p}(x)=\ES[\sup_{t\in [0,T]}V^p(\xi^{x,\gamma^{(k)}}_t)].}$ By Lemma~\ref{lemmem1}(i) applied with $\phi=V$ and $p={a(1+\varepsilon)}$ with $\varepsilon\!\in(0, \frac{p-1}{a})$. it follows that for every $k\ge2$,
\begin{equation}\label{controlemk}
\ES_{k-2}\left|\Delta M_k\right|^{2(1+\varepsilon)}\le C_{F,\varepsilon,T}V^{a(1+\varepsilon)}(\xi_{(k-2)T}).
\end{equation}
Then, we deduce from Lemma~\ref{lemme0} applied with $\delta_k=k^{-(1+\varepsilon)}$ that
$$
\sum_{k\ge2}\ES_{k-2}\left|\Delta N_k\right|^{1+\varepsilon}\le \sum_{k\ge2}\frac{1}{k^{1+\varepsilon}} V^{a(1+\varepsilon)}(\xi_{(k-2)T})<+\infty \quad a.s.
$$
since $a(1+\varepsilon)<p+a-1$. Finally, using the Chow theorem, it follows  that $(N_n)$ is an $a.s.$ convergent martingale and the result follows from the Kronecker lemma.

\medskip
\noindent \textit{(ii)} Set $\bar{C}_{k}=\bar{A}_{k}+\bar{B}_{k-1}$. We have $\Delta M_k=\ES_k[\bar{C}_{k+1}]-\ES_{k-1}[\bar{C}_{k+1}]$ so that
$$
\ES_{k-2}[(\Delta M_k)^2]=\ES_{k-2}[(\ES_k\bar{C}_{k+1})^2]-\ES_{k-2}[(\ES_{k-1}\bar{C}_{k+1})^2].
$$
Thus, it is enough to show that
\begin{align}\label{part1}
&\frac{1}{n}\sum_{k=2}^n\ES_{k-2}[(\ES_k\bar{C}_{k+1})^2]-\ES_{k-2}[(\ES_k{C}_{k+1})^2]\xrn{n\nrn}0\quad a.s.\\
\textnormal{and }\qquad\qquad&\frac{1}{n}\sum_{k=2}^n\ES_{k-2}[(\ES_{k-1}\bar{C}_{k+1})^2]-\ES_{k-2}[(\ES_{k-1}{C}_{k+1})^2]\xrn{n\nrn}0\quad a.s.\qquad\label{part2}
\end{align}
Let us focus on~\eqref{part1}. Set $q=\frac{p}{p-1}$. Using conditional H\"older and Jensen inequalities, we obtain:
\begin{align}
|\ES_{k-2}[(\ES_k\bar{C}_{k+1})^2] - &\,\ES_{k-2}[(\ES_k{C}_{k+1})^2]|=|\ES_{k-2}\left[\ES_k(\bar{C}_{k+1}-C_{k+1})
\ES_k (\bar{C}_{k+1}+C_{k+1})\right]|\nonumber\\
\le &\;\ES_{k-2}\left[(\ES_k|\bar{A}_{k+1}-A_{k+1}|^p)^\frac{1}{p}
(\ES_k |\bar{C}_{k+1}+C_{k+1}|^q)^\frac{1}{q}\right]\nonumber\\
&+\ES_{k-2}\left[(\ES_k(\bar{B}_{k+1}-B_{k+1})^2)^\frac{1}{2}
(\ES_k (\bar{C}_{k+1}+C_{k+1})^2)^\frac{1}{2}\right]\nonumber\\
\le&\left(\ES_{k-2}|\bar{A}_{k+1}-A_{k+1}|^p\right)^\frac{1}{p}\left(\ES_{k-2}|\bar{C}_{k+1}+C_{k+1}|^q\right)^\frac{1}{q}\label{ABC2}\\
&+\left(\ES_{k-2}|\bar{B}_{k}-B_{k}|^2\right)^\frac{1}{2}\left(\ES_{k-2}(\bar{C}_{k+1}+C_{k+1})^2\right)^\frac{1}{2}.\label{ABC}
\end{align}
Let us inspect successively the three terms involved in~\eqref{ABC}.\\
Now set $\displaystyle{{G}_{p}(x)=\ES[\sup_{t\in [0,T]}V^p(X^{x}_t)]}$. Still using Lemma~\ref{lemmem1}$(i)$, we show (like previously for~(\ref{controlemk}))  that, for every $k\ge 2$ and $r\ge2$, 
\begin{equation}\label{part11}
\ES_{k-2}|\bar{C}_{k+1}+C_{k+1}|^r\le C\left(1+\bar{G}_{k-2,\frac{r}{2}}({\xi}_{(k-2)T})+G_{\frac{r}{2}}(\xi_{(k-2)T})\right)\le C V^{\frac{r}{2}}(\xi_{(k-2)T}).
\end{equation}
On the other hand since $F$ and $f_{_F}$ are bounded Lipschitz continuous functions,
\begin{align*}
\ES_{k-2}[&|\bar{A}_{k+1}-A_{k+1}|^p]\le C\left(1\wedge\ES_{k-2}\left[\sup_{v\in [(k-2)T,(k+1)T]}|\xi_{\usim{v}}-Z^{k-2}_v|^p\right]\right)\\
&\le C\left(\ES_{k-2}\left[\sup_{v\in [(k-2)T,(k+1)T]}|\xi_{\usim{v}}-\xi_{v}|^p\right]+\ES_{k-2}\left[\sup_{v\in [(k-2)T,(k+1)T]}|\xi_{v}-Z^{k-2}_v|^p\right]\right)\wedge 1.
\end{align*}
Then, owing to the Markov property,
\begin{align*}
&\ES_{k-2}[|\bar{A}_{k+1}-A_{k+1}|^p]\le C \left[\left( {\cal H}_{k-2,3T,p}(\xi_{(k-2)T})+ {\cal K}_{k-2,3T,p}(\xi_{(k-2)T})\right)\wedge 1\right]\quad\textnormal{with,}\\
&{\cal H}_{k,T,p}(x)=\ES[ \sup_{v\in[0,T]}|\xi_{\usim{v}}^{x,\gamma^{(kT)}}-\xi_v^{x,\gamma^{(kT)}}|^p]\quad
\textnormal{and }\quad {\cal K}_{k,T,p}(x)=\ES[ \sup_{v\in[0,T]}|\xi_v^{x,\gamma^{(kT)}}-X^x_v|^p],
\end{align*}
where $(\xi_v^{x,\gamma^{(kT)}})_{v\ge 0}$ denotes the Euler scheme of $X^x$ with step sequence $\gamma^{(kT)}$ as defined by~(\ref{gammau}). 
Now, using that for every $v\in[0,T]$,
\begin{align*}
|\xi_{\usim{v}}^{x,\gamma^{(kT)}}-\xi_v^{x,\gamma^{(kT)}}|^p&\le 2^{p-1}\left(|\xi_{v}^{x,\gamma^{(kT)}}-\xi_{\underline{v}}^{x,\gamma^{(kT)}}|^p+|\xi_{\usim{v}}^{x,\gamma^{(kT)}}-\xi_{\underline{v}}^{x,\gamma^{(kT)}}|^p\right)\\
&\le 2^p\sup_{v\in[0,T]}|\xi_v^{x,\gamma^{(kT)}}-\xi_{\underline{v}}^{x,\gamma^{(kT)}}|^p,
\end{align*}
it follows from Lemma~\ref{lemmem1}$(iv)$ that 
\begin{equation}\label{eq:calH}
{\cal H}_{k,T,p}(x)\le C\gamma_{N(kT)}^{\frac{p}{2}-1} V^{\frac{ap}{2}}(x),
\end{equation}
and by  Lemma~\ref{lemmem1}$(ii)$,
\begin{equation}\label{eq:calK}
{\cal K}_{k,T,p}(x)\le  C(1+|x|^p)\gamma_{N(kT)}^\frac{p}{2},
\end{equation}
so that, for every $M>0$, 
\begin{align}
\ES_{k-2}[|\bar{A}_{k+1}-A_{k+1}|^p] \le & \;C\gamma_{N((k-2)T)}^{\frac{p}{2}-1}(1+V^{\frac{ap}{2}}(\xi_{(k-2)T})+|\xi_{(k-2)T}|^p)\mbox{\bf 1}_{\{|\xi_{(k-2)T}|\le M\}}\nonumber\\
\label{330} &+C\mbox{\bf 1}_{\{|\xi_{(k-2)T|}>M\}}.
\end{align}
Finally, we have 
\begin{align*}
&\ES_{k-2}[|\bar{B}_{k}-B_{k}|^2] = \ES_{k-2}\left[\int_{I_k}|\sigma^*\nabla g_{_F}(\xi_{\usim{u}})-\sigma^*\nabla g_{_F}(Z^{k-2}_u)|^2 du\right],
\end{align*}

On the one hand   $\nabla g_{_F}$ and $\sigma$  being both Lipschitz continuous and $\nabla g_{_F}$ being bounded, we have for every $x,y\in\ER^d$,
\begin{equation}\label{25}
|\sigma^*\nabla g_{_F}(x)-\sigma^* \nabla g_{_F}(y)|^2\le C(1+ \|\sigma(y)\|^2)|x-y|^2.
\end{equation}
As a consequence, using Schwarz inequality and Assumption $\mathbf{(S_{a,p})}(i)$, it follows that
\begin{align*}
\ES_{k-2}[|\bar{B}_{k}&-B_{k}|^2]\\
 &\le C\left(\ES_{k-2}\left[1+\sup_{u\in[(k-1)T,kT]}V^{2a}(Z^{k-2}_{u})\right]\right)^\frac{1}{2}\left(\ES_{k-2}\sup_{u\in[(k-1)T,kT]}|\xi_{\usim{u}}-Z^{k-2}_u|^4\right)^\frac{1}{2}.
\end{align*}
Owing to Lemma~\ref{lemmem1}$(i)$, it follows that
\[
\ES_{k-2}[|\bar{B}_{k}-B_{k}|^2]
\le C\,V^{a}(\xi_{(k-2)T})({\cal H}_{k-2,2T,4}(\xi_{(k-2)T})+{\cal K}_{k-2,2T,4}(\xi_{(k-2)T}))^\frac{1}{2}. 
\]
and by~\eqref{eq:calH} and~\eqref{eq:calK} that,
\begin{align}
\ES_{k-2}[|\bar{B}_{k}-B_{k}|^2]
&\le C\,V^{a}(\xi_{(k-2)T})\left(\sqrt{\gamma_{N((k-2)T)}} V^{a}(\xi_{(k-2)T})+\gamma_{N((k-2)T)}(1+|\xi_{(k-2)T}|^2)\right)\nonumber\\
&\le  C'\,\sqrt{\gamma_{N((k-2)T)}}(1+V^{2a}(\xi_{(k-2)T})+|\xi_{(k-2)T}|^{2(1+a)})\label{331},
\end{align}
where we used in the last inequality that $V(x)\le C(1+|x|^2)$. On the other hand since 
\begin{equation*}
|\sigma^*\nabla g_{_F}(x)-\sigma^* \nabla g_{_F}(y)|^2\le C(\|\sigma(x)\|^2 +\|\sigma(y)\|^2),
\end{equation*}
we  deduce likewise from $\sapi$ and  Lemma~\ref{lemmem1}$(i)$ that
\begin{align}\label{332}
\ES_{k-2}|\bar{B}_{k}&-B_{k}|^2\le C V^a(\xi_{(k-2)T}).
\end{align}
Thus, plugging the inequalities obtained in~\eqref{part11},~\eqref{330},~\eqref{331} and~\eqref{332} into~\eqref{ABC} and~\eqref{ABC2} yields for every $M>0$,
$$
|\ES_{k-2}[(\ES_k\bar{C}_k)^2]-\ES_{k-2}[(\ES_k{C}_k)^2]|\le C_M\gamma_{N((k-2)T)}^{\frac 14\wedge(\frac{1}{2}-\frac 1p)}\mbox{\bf 1}_{\{|\xi_{(k-2)T}|\le M\}}+ C V^{a}(\xi_{(k-2)T})\mbox{\bf 1}_{\{|\xi_{(k-2)T}|> M\}}.
$$
Since $\gamma_{N(kT)}\to 0$ as $k\to\infty$ and $p>2$, it follows that $a.s.$, for every $M>0$, 
$$
\limsup_{n\rightarrow+\infty}\frac{1}{n}\sum_{k=2}^n\left|\ES_{k-2}[(\ES_k\bar{C}_k)^2]-\ES_{k-2}[(\ES_k{C}_k)^2]\right|\le C \limsup_{n\rightarrow+\infty}\frac{1}{n}\sum_{k=2}^n V^{a}(\xi_{(k-2)T})\mbox{\bf 1}_{\{|\xi_{(k-2)T}|> M\}}.
$$
Since $p>2$,  there exists $\varepsilon>0$ such that $a(1+\varepsilon)<\frac p2+a-1$. Hence, it follows from~\eqref{lyapcontrol2}, that
$$\sup_{n\ge1}\frac{1}{n}\sum_{k=2}^n V^{a(1+\varepsilon)}(\xi_{(k-2)T})<+\infty\quad a.s.$$
Then, we deduce by a standard uniform integrability argument that 
$$
\limsup_{M\rightarrow+\infty}\limsup_{n\rightarrow+\infty}\frac{1}{n}\sum_{k=2}^n V^{a}(\xi_{(k-2)T})\mbox{\bf 1}_{\{|\xi_{(k-2)T}|> M\}}=0\quad a.s.
$$
This completes the proof of~\eqref{part1}. The proof of~\eqref{part2} is similar and the details are left to the reader.
\end{proof}
\begin{lemme}\label{lemme3} Assume that $b$ and $\sigma$ are Lipschitz continuous functions.\\ 
\noindent $(i)$ For every $k\ge 1$,
\begin{equation}
\ES_{k-2}[(\ES_k C_{k+1})^2]-\ES_{k-2}[(\ES_{k-1} C_{k+1})^2]=\Psi({\xi}_{(k-2)T})\quad\textnormal{where,}
\end{equation}
$$
\Psi(x)=\ES\left[\Big(\ES\Big(A_{_{2T}}^x\,|\,{\cal F}_{_{2T}}\Big)-\ES\Big(A_{_T}^x\,|\,{\cal F}_{_T}\Big)-\int_T^{2T}\sigma^*\nabla g_F(X_u^x)dW_u\Big)^2\right],
$$
with $A^x_t:=\int_0^t \left(F_{_T}(X^x_{u+.})-f_{_F}(X^x_u)\right) du$, $t\ge 0$.

\smallskip
\noindent $(ii)$ If $\cunf$ holds, $\Psi$ is a continuous function on $\ER^d$. As a consequence, if moreover $\cdeuxf$, $\mathbf{(S^\nu_T)}$ and $\sap$ hold for $a\in(0,1]$ and $p>2$,
\begin{equation}\label{36}
\frac{1}{n}\sum_{k=2}^n\ES_{k-2}[(\Delta M_k)^2]\xrn{n\nrn}\sigma^2_{_F}=\int \Psi(x)\nu(dx)\quad a.s.
\end{equation}
\end{lemme}
\begin{proof} $(i)$ Let $\Lambda$ be a bounded (or nonnegative) Borel functional defined on ${\cal C}(\ER_+,\ER^d)$. Since pathwise uniqueness holds for $SDE$~\eqref{sde} ($b$ and $\sigma$ being Lipschitz continuous), there exists a measurable function $h:\ER^d\times{\cal C}(\ER_+,\ER^\ell)\rightarrow{\cal C}(\ER_+,\ER^d)$ such that $a.s.$, for every $k\ge2$, $Z^{k-2}=X^{((k-2)T),\xi_{(k-2)T}}=h(\xi_{(k-2)T},W^{((k-2)T)})$
(see $e.g.$~\cite{karatzas}, Corollary 3.23). Then, using that $\xi_{(k-2)T}$ is ${\cal G}_{k-2}= {\cal F}_{(k-2)T}$-measurable, that the Brownian motion $W^{((k-2)T)}$ is independent of $ {\cal F}_{(k-2)T}$ and that, ${\cal F}_{kT}={\cal F}_{(k-2)T}\vee {\cal F}^{W^{((k-2)T)}}_{{2T}}$, one derives that
\begin{eqnarray*}
\ES_{k}\Big(\Lambda(X^{((k-2)T),\xi_{(k-2)T}})\Big)&=& \ES\Big(\Lambda(X^{((k-2)T),\xi_{(k-2)T}})\,|\, {\cal F}_{2T}^{W^{((k-2)T)}}\Big)\\
&=&\ES\Big(\Lambda(X^{((k-2)T),x})\,|\, {\cal F}_{2T}^{W^{((k-2)T)}}\Big)_{|x=\xi_{(k-2)T}}.
\end{eqnarray*}
Using again the representation with function $h$ (or the fact that strong uniqueness implies weak uniqueness), one observes  that the spatial process $\displaystyle \left(\ES\Big(\Lambda(X^{((k-2)T),x})\,|\, {\cal F}_{2T}^{W^{((k-2)T)}}\Big)\right)_{x\in \ER^d}$ has the same distribution as $\displaystyle \left(\ES\Big(\Lambda(X^{x})\,|\, {\cal F}_{2T}^{W}\Big)\right)_{x\in \ER^d}$ where $(X^{x}_t)_{t\ge 0, x\in \ER^d}$ is the flow of $SDE$~(\ref{sde}) at tme $0$. Consequently,
\[
\ES_{k-2}\left(\ES_{k}\Big(\Lambda(X^{((k-2)T),\xi_{(k-2)T}})\Big)^2\right)= \left[\ES_x\left(\ES\Big(\Lambda(X^{x})\,|\, {\cal F}_{2T}^{W}\Big)\right)^2\right]_{x= \xi_{(k-2)T}}.
\]
Similar arguments show that 
\[
\ES_{k-2}\left(\ES_{k-1}\Big(\Lambda(X^{((k-2)T),\xi_{(k-2)T}})\Big)^2\right)= \left[\ES_x\left(\ES\Big(\Lambda(X^{x})\,|\, {\cal F}_{T}^{W}\Big)\right)^2\right]_{x= \xi_{(k-2)T}}.
\]
Thus, it follows from the definition of $A_{k+1}$ and $B_{k+1}$ that
\begin{align*}
\ES_{k-2}[(\ES_k C_{k+1})^2]-\ES_{k-2}[(\ES_{k-1} C_{k+1})^2]&=\left[\ES_x\left(\ES\left(\Lambda_1(X^x)\,|\, {\cal F}_{2T}^{W}\right)^2-\ES\left(\Lambda_1(X^x)\,|\, {\cal F}_{T}^{W}\right)^2\right)\right]_{x= \xi_{(k-2)T}},
\end{align*}
where
\begin{align*}
\Lambda_1(X^x)& :=\int_0^{2T}F_{_T}(X^x_{u+.})du-\int_T^{2T}f_{_F}(X^x_u)du-\int_0^T\langle \sigma^*\nabla g_{_F}(X^x_u ),  dW_u \rangle\\
&= \int_0^{2T}F_{_T}(X^x_{u+.})du-\int_T^{2T}f_{_F}(X^x_u)du- \left(g_{_F}(X^x_{_T})-g_{_F}(X^x_0)-\int_0^T{\cal A}g_{_F}(X^x_u)du\right).
\end{align*}
Note that the second expression clearly defines a functional on the canonical space. Now,
\begin{align*}
\ES_x\Big(\ES\left(\Lambda_1(X^x)\,|\, {\cal F}_{2T}^{W}\right)^2-\ES\left(\Lambda_1(X^x)\,|\, {\cal F}_{T}^{W}\right)&^2\Big)=\ES_x\left[\left(\ES\left(\Lambda_1(X^x)\,|\, {\cal F}_{2T}^{W}\right)-\ES\left(\Lambda_1(X^x)\,|\, {\cal F}_{T}^{W}\right)\right)^2\right]\\
&=\ES_x\left[\left(\ES\left(\tilde{\Lambda}_1(X^x)\,|\, {\cal F}_{2T}^{W}\right)-\ES\left(\tilde{\Lambda}_1(X^x)\,|\, {\cal F}_{T}^{W}\right)\right)^2\right]
\end{align*}
where $\tilde{\Lambda}_1(X^x)=\Lambda_1(X^x)-\int_0^T f_{_F}(X_u^x)du$. The result follows using that ${\cal F}_s^W={\cal F}_s$, that $(\int_0^{t}\langle \sigma^*\nabla g_{_F}(X^x_u ),  dW_u)_{t\ge0}$ is a $({\cal F}_t)$-martingale and that $\ES[A_{_{2T}}-A_{_T}\,|\,{\cal F}_T]=0$.

\medskip
\noindent $(ii)$ Let $x\in\ER^d$ and set  
$$
\psi(x,.)=\ES\Big(A_{_{2T}}^x\,|\,{\cal F}_{_{2T}}\Big)-\ES\Big(A_{_T}^x\,|\,{\cal F}_{_T}\Big)-\int_T^{2T}\sigma^*\nabla g_F(X_u^x)dW_u.
$$ 
Let $(x_n)$ be a convergent sequence of $\ER^d$ to $x$. Owing to the standard identity $a^2-b^2=(a-b)(a+b)$ and Schwarz's inequality,
$$
|\Psi(x)-\Psi(x_n)|\le\ES[|\psi(x,.)-\psi(x_n,.)|^2]^{\frac{1}{2}} \ES[|\psi(x,.)+\psi(x_n,.)|^2]^{\frac{1}{2}} 
$$
Let ${\cal S}=\{x\}\cup\{x_n,n\ge1\}$. Since $F$ and $f_{_F}$ are bounded,  
\begin{align*}
\ES[|\psi(x,.)+\psi(x_n,.)|^2]&\le C\left(1+ \sup_{v\in{\cal S}}\ES\left[\left(\int_T^{2T}\sigma^*\nabla g_F(X_u^{v})dW_u\right)^2\right]\right)\\
&=C(1+\sup_{v\in{\cal S}}\int_T^{2T}\ES[|\sigma^*\nabla g_{_F}(X_u^{v})|^2]du\\
&\le C(1+\sup_{v\in{\cal S}}\sup_{u\in[T,2T]}\ES[V^a(X_u^{v})]\le C(1+\sup_{n\ge1}V^a(x_n)),
\end{align*}
owing to Lemma~\ref{lemmem1}$(i)$. Thus,
$$
|\Psi(x)-\Psi(x_n)|\le C\ES[|\psi(x,.)-\psi(x_n,.)|^2]^{\frac{1}{2}}
$$
and it follows easily that $\Psi$ will be continuous if both $x\mapsto \ES_x\left(\ES\Big(A_{_{iT}}^x\,|\,{\cal F}_{_{iT}}\Big)\right)^2$ ($i=1,2$) and $x\mapsto \ES_x\left(\int_T^{2T}\sigma^*\nabla g_F(X_u^x)dW_u\right)^2,$ are continuous. On the one hand $F$ and $f_{_F}$ being Lipschitz continuous, elementary computations show that for $i=1,2$,
\begin{eqnarray*}
 \ES \left|\int_0^{iT}  \ES[A_{_{iT}}^x\,|\, {\cal F}_{iT}]du-\int_0^{iT}  \ES[A_{_{iT}}^{x_n}\,|\, {\cal F}_{{iT}}]du\right|^2&\le&\left\|\sup_{t\in[0,3T]}|X^{x}_t-X^{x_n}_t|\right\|_2^2.\\
 \end{eqnarray*}
Now, since $b$ and $\sigma$ are Lipschitz continuous functions, for every $p>0$, there exists a real constant $C_{b,\sigma,p,T}>0$ such that (see $e.g.$ ~\cite{kunita} or ~\cite{PRO}),
\[
\left\|\sup_{t\in[0,3T]}|X^{x}_t-X^{x_n}_t|\right\|_p^p\le C_{b,\sigma,p,T}|x-x_n|^p.
\]
The continuity of $x\!\mapsto\! \ES_x\left(\ES\Big(A_{_{iT}}^x\,|\,{\cal F}_{_{iT}}\Big)\right)^2$, $i\!=\!1,2$,  follows.
On the other hand using~\eqref{25},
\begin{align*}
\ES\Big| \int_T^{2T}\psg \nabla & g_{_F}(X_u^x),
\sigma(X_u^x)dW_u\psd \int_T^{2T}\psg \nabla g_{_F}(X_u^{x_n}),
\sigma(X_u^{x_n})dW_u\psd\Big|^2\\
&= \int_T^{2T}\ES|\sigma^*\nabla g_{_F}\sigma(X_u^{x})-\sigma^*\nabla g_{_F}(X_u^{x_n})|_2^2du\\
&\le  C\left(\ES[1+\sup_{u\in[T,2T]}|X_u^x|^4]^\frac{1}{2}\ES[\sup_{u\in[T,2T]}|X_u^{x}-X_u^{x_n}|^4]^\frac{1}{2}\right)\le C (1+|x|^2)|x-x_n|^2.
\end{align*}
This concludes the proof.
 \end{proof}
\begin{prop}\label{propTCL}
Suppose that assumptions of Theorem~\ref{thprincipal}(a) hold. Then, 
\begin{equation}
\frac{M_n}{\sqrt{n}}\xrightarrow{{\cal L}}{\cal N}(0,\sigma_{_F}^2)\quad \mbox{ as }\quad n\rightarrow+\infty.
\end{equation}
\end{prop}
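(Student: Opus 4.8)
The plan is to deduce the $CLT$ for $(M_n)$ from a martingale central limit theorem applied to the triangular array $(\Delta M_k/\sqrt n)_{2\le k\le n}$ relative to the filtration $(\gn)=(\fnt)$ (see \cite{hall}). Two ingredients are needed: the convergence of the normalized previsible bracket $\frac1n\sum_{k=2}^n\ES_{k-1}[(\Delta M_k)^2]$ toward the deterministic limit $\sigma^2_{_F}$, and a conditional Lindeberg condition. The substantive analytic work has already been carried out in Lemmas~\ref{lemme2} and~\ref{lemme3}, so the proof of the proposition essentially amounts to assembling these.

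For the bracket, I would first invoke~\eqref{Eq51} to replace, up to a Cesàro--negligible error, the one-step conditional second moments $\ES_{k-1}[(\Delta M_k)^2]$ by the two-step ones $\ES_{k-2}[(\Delta M_k)^2]$, so that the two Cesàro averages share the same $a.s.$ limit. Then~\eqref{36} of Lemma~\ref{lemme3}$(ii)$ identifies that limit: the continuity of $\Psi$ established in Lemma~\ref{lemme3} together with the $a.s.$ ergodic behaviour of the empirical measures $\frac1n\sum_{k}\delta_{\xi_{(k-2)T}}$ from Lemma~\ref{lemme0}$(iii)$ yield $\frac1n\sum_{k=2}^n\Psi(\xi_{(k-2)T})\to\int\Psi\,d\nu=\sigma^2_{_F}$ $a.s.$ Combining these gives $\frac1n\sum_{k=2}^n\ES_{k-1}[(\Delta M_k)^2]\to\sigma^2_{_F}$ $a.s.$, i.e. the convergence of $\langle M\rangle_n/n$.

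For the Lindeberg condition I would exploit the conditional Lyapunov control~\eqref{controlemk}, namely $\ES_{k-2}|\Delta M_k|^{2(1+\varepsilon)}\le C_{F,\varepsilon,T}\,V^{a(1+\varepsilon)}(\xi_{(k-2)T})$ for small $\varepsilon>0$. Since $p>2$, one may choose $\varepsilon$ with $a(1+\varepsilon)<\frac p2+a-1$; then, for a truncation threshold $\epsilon\sqrt n$, the elementary bound $(\Delta M_k)^2\mathbf{1}_{\{|\Delta M_k|>\epsilon\sqrt n\}}\le(\epsilon\sqrt n)^{-2\varepsilon}|\Delta M_k|^{2(1+\varepsilon)}$ dominates the conditional Lindeberg sum by
\[
\frac{1}{\epsilon^{2\varepsilon}\,n^{\varepsilon}}\cdot\frac1n\sum_{k=2}^n\ES_{k-2}\big[|\Delta M_k|^{2(1+\varepsilon)}\big]\le \frac{C}{\epsilon^{2\varepsilon}\,n^{\varepsilon}}\cdot\frac1n\sum_{k=2}^n V^{a(1+\varepsilon)}(\xi_{(k-2)T}).
\]
The rightmost Cesàro average is $a.s.$ bounded by~\eqref{lyapcontrol2} of Lemma~\ref{lemme0}$(ii)$, so the prefactor $n^{-\varepsilon}\to0$ forces the Lindeberg sum to vanish $a.s.$, hence in probability. (The discrepancy between $\ES_{k-1}$ and $\ES_{k-2}$ moments, if one insists on the one-step version, is handled by the same Chow--Kronecker martingale argument already used for~\eqref{Eq51}.)

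With both conditions verified, the martingale central limit theorem delivers $M_n/\sqrt n\to\mathcal N(0,\sigma^2_{_F})$ in distribution. The genuine obstacle is not this final assembly but the bracket analysis itself: controlling the previsible bracket of the sum $\sum_{i=1}^4M_{n,i}$ of four interdependent martingale components and pinning down its limit as $\int\Psi\,d\nu$, which is exactly the content of Lemmas~\ref{lemme2}--\ref{lemme3} and is invoked here as a black box.
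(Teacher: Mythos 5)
Your proposal is correct and follows essentially the same route as the paper: the martingale CLT of Hall--Heyde with the bracket convergence supplied by Lemmas~\ref{lemme2} and~\ref{lemme3} (passing from $\ES_{k-1}$ to $\ES_{k-2}$ via~\eqref{Eq51}), and the conditional Lindeberg condition obtained from the Lyapunov bound~\eqref{controlemk} combined with the Ces\`aro control~\eqref{lyapcontrol2} and the decay of the $n^{-\varepsilon}$ prefactor. No gap to report.
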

\begin{proof}
By Lemma~\ref{lemme3}, 
\begin{equation*}
\frac{1}{n}\sum_{k=2}^n\ES_{k-2} (\Delta M_k)^2\xrn{n\nrn}\sigma_{_F}^2\quad a.s.
\end{equation*}
Then, we only need to prove a Lindeberg type condition (see~\cite{hall}, Corollary 3.1). To be precise, we will show that for every $\varepsilon>0$
\begin{equation*}
\frac{1}{n}\sum_{k=1}^n \ES_{k-1}[|\Delta M_k|^2{\bf{1}}_{\{|\Delta M_k|\ge \varepsilon\sqrt{n}\}}]\xrn[n\nrn]{\PE}0.
\end{equation*}
First, a martingale argument similar to that of the beginning of the proof of Lemma~\ref{lemme1}, yields that
\begin{equation*}
\frac{1}{n}\sum_{k=2}^n \left(\ES_{k-1}[|\Delta M_k|^2{\bf{1}}_{\{|\Delta M_k|\ge \varepsilon\sqrt{n}\}}]-
\ES_{k-2}[|\Delta M_k|^2{\bf{1}}_{\{|\Delta M_k|\ge \varepsilon\sqrt{n}\}}]\right)\xrn[n\nrn]{\PE}0.
\end{equation*}
Second, using conditional H\"older and Chebyschev inequalities, we have for every $\varepsilon,\, \delta>0$
$$
\ES_{k-2}[|\Delta M_k|^2{\bf{1}}_{\{|\Delta M_k|\ge \varepsilon\sqrt{n}\}}]\le \frac{1}{(\varepsilon n)^{2\delta}}\ES_{k-2}[|\Delta M_k|^{2(1+\delta)}],
$$
and thanks to~\eqref{bk1} and~\eqref{bk2}, we deduce that 
$$
\ES_{k-2}|\Delta M_k|^{2(1+\delta)}     {\bf{1}}_{\{|\Delta M_k|  \ge \varepsilon\sqrt{n}\}}\le C \bar{G}_{k-2,a(1+\delta)}(\xi_{(k-2)T})\le\frac{C_{\varepsilon}}{ n^{2\delta}} 
V^{a(1+\delta)}(\xi_{(k-2)T}).
$$
Thus, taking $\delta\!\in(0, \frac{\frac p2 -1}{a})$ so that $a(1+\delta)\le p/2+a-1$, we have for every $\delta>0$, $a.s.$,
\begin{equation*}
\limsup_{n\rightarrow+\infty}\frac{1}{n}\sum_{k=1}^n \ES_{k-1}[|\Delta M_k|^2{\bf{1}}_{\{|\Delta M_k|\ge \varepsilon\sqrt{n}}]
\le C_\varepsilon \limsup_{n\rightarrow+\infty}\frac{1}{n^{1+2\delta}}\sum_{k=1}^n V^{a(1+\delta)}(\xi_{(k-2)T})=0,
\end{equation*} 
by applying Lemma~\ref{lemme0}$(ii)$.
 \end{proof}
\section{Study of $(\Theta_{n,1})$, $(\Theta_{n,2})$ and $(\Theta_{n,3})$}\label{Thetastudy}
In this section, we focus on the remainder terms of the decomposition of the error (see Lemma~\ref{lemme1}). Owing to Proposition~\ref{propTCL},
it is now enough to prove that 
$$
\frac{\Theta_{n,i}}{\sqrt{n}}\xrn{\PE} 0\quad  \mbox{ as } \quad n\nrn 
\quad  \textnormal{ for }i=1,2,3.
$$
where $\xrn{\PE} $ denotes the convergence in probability. For $i=1, 2$, these properties are stated in Lemma~\ref{lemme4} and~\ref{lemme5}. For $i=3$, the result is obvious.

\begin{lemme}\label{lemme4}
Assume $b$ and $\sigma$  are Lipschitz continuous functions 
such that  $\mathbf{(S_{a,p})}$ with parameters $a\in(0,1]$, $p>2$, and an essentially quadratic Lyapunov function  $V$  satisfying  $\liminf_{|x|\rightarrow+\infty} V^{p+a-1}(x)/|x|>0$. Let $F:\mathbb{D}_{uc}(\ER_+,\ER^d)\rightarrow\ER$ be Lipschitz continuous.  If  the step condition~(\ref{condpas33}) holds 
then
$$
\frac{\Theta_{n,1}}{\sqrt{n}}\xrn{\PE} 0\quad  \mbox{ as } \quad n\nrn. 
$$
\end{lemme}
\noindent {\it Proof.} 
Since $F$ is Lipschitz continuous, it follows from Lemma~\ref{lemmem1}$(ii)$ (applied with $p=1$) that, for every $u\!\in I_k$, 
\begin{eqnarray*}
\ES_{k-1}\left|F_{_T}({\xi}^{(\usim{u})})-F_{_T}\big(X^{(\usim{u}),\xi_{\usim{u}}}\big)\right|&\le& [F]_{{\rm Lip}}\ES_{k-1} \left[\sup_{t\in[0,T]}|{\xi}^{(\usim{u})}_t-X^{(\usim{u}),\xi_{\usim{u}}}_t|\right]\\
&\le& C_{b,\sigma,T,F}\sqrt{\gamma_{N(\usim{u})}}(1+\ES_{k-1}|\xi_{\usim{u}}|).
\end{eqnarray*}
Consequently, 
\begin{align*}
|\Theta_{n,1}| &\le  C_{b,\sigma,T,F}\sum_{k=1}^n  \int_{I_k}\sqrt{\gamma_{N(\usim{u})+1}}du\,(1+\ES_{k-1}\sup_{v\in I_k}|\xi_{v}|)\\&\le C_{b,\sigma,T,F}\sum_{k=1}^n 
\int_{I_k}\sqrt{\gamma_{N(\usim{u})+1}}du\,(1+|\xi_{(k-1)T}|)
\end{align*}
where in the second inequality, we used Lemma~\ref{lemmem1}$(i)$. Since $\liminf_{|x|\to +\infty} V^{p+a-1}(x)/|x|>0$   and $N(\usim{u})=N(u)$, we deduce that
\begin{equation}\label{CTheta1}
\frac{|\Theta_{n,1}|}{\sqrt{n}}\le \frac{C}{\sqrt{n}}\sum_{k=1}^n  \int_{I_k}\sqrt{\gamma_{N(u)+1}}du\,(1+V^{p+a-1}(\xi_{(k-1)T})).
\end{equation}
Thus, owing to the Kronecker Lemma, 
$$\frac{\Theta_{n,1}}{\sqrt{n}}\xrn{n\nrn}0\quad a.s.\quad\textnormal{if}\quad\sum_{k=1}^n  \delta_k\,(1+V^{p+a-1}(\xi_{(k-1)T}))<+\infty\quad a.s.,$$
with 
$$
\delta_k=\frac{1}{\sqrt{k}}\left(\int_{I_k}\sqrt{\gamma_{N(u)+1}}du\right).
$$
Now, as  $(\delta_k)$ is nonincreasing, it follows from Lemma~\ref{lemme0}$(i)$ that it is now enough to show that 
$\sum_{k\ge1}\delta_k<+\infty$. We have
\begin{equation*}
\sum_{k\ge1}\delta_k \le C\left(1+\int_{\gamma_1}^{+\infty}\sqrt{\frac{\gamma_{N(u)+1}}{u}}du\right)
\end{equation*}
and 
\[
\int_{\gamma_1}^{+\infty}\sqrt{\frac{\gamma_{N(u)+1}}{u}}du\le\sum_{\ell\ge1}\sqrt{\gamma_{\ell+1}}\int_{\Gamma_\ell}^{\Gamma_{\ell+1}}\frac{1}{\sqrt{u}}du\le \sum_{\ell\ge1}\sqrt{\gamma_{\ell+1}}\frac{\gamma_{\ell+1}}{\sqrt{\Gamma_\ell}}.
\]
Using that the step sequence $(\gamma_n)$ is nonincreasing, we deduce from Condition~\eqref{condpas33} that
\begin{equation}\label{contserie2}
\int_{\gamma_1}^{+\infty}\sqrt{\frac{\gamma_{N(u)+1}}{u}}du\le \sum_{\ell\ge1}\frac{\gamma_{\ell}^{\frac{3}{2}}}{\sqrt{\Gamma_\ell}}<+\infty. \hskip 2 cm  \Box
\end{equation}
\begin{lemme}\label{lemme5}
Assume $b$ and $\sigma$  are Lipschitz continuous functions 
satisfying  $\mathbf{(S_{a,p})}$ with an essentially quadratic Lyapunov function  $V$ and parameters $a\in(0,1]$ and $p>2$.  Let $F:\mathbb{D}_{uc}([0,T],\ER^d)\to \ER$ be a functional satisfying $\mathbf{(C_F^1)}$ and $\mathbf{(C_F^2)}$.
If the step condition~(\ref{condpas33}) holds, then
$$
\frac{\Theta_{n,2}}{\sqrt{n}}\xrn{\PE} 0\quad  \mbox{ as } \quad n\nrn.
$$
\end{lemme}
\begin{proof} Owing to the It\^o formula, we have:
\begin{align*}
&g_{_F}(\xi_{kT})-g_{_F}(\xi_{(k-1)T})=\int_{I_k} \bar{{\cal A}}g_{_F}(\xi_u,\xi_{\underline{u}}) du+\int_{I_k}\psg \nabla g_{_F}(\xi_u),\sigma(\xi_{\underline{u}})dW_u\psd\quad\textnormal{where},\\
&\bar{{\cal A}}g_{_F}(x,y)=\psg\nabla g_{_F}(x),b(y)\psd+\frac{1}{2}{\rm Tr}\left(\sigma^*(y)D^2 g_{_F}(x)\sigma(y)\right).
\end{align*}
Then, it follows from the definition of $\Theta_{n,2}$ that
\begin{align*}
\Theta_{n,2}&=\sum_{k=1}^n g_{_F}(\xi_{kT})-g_{_F}(\xi_{(k-1)T})+\int_0^{nT}
\left({\cal A} g_{_F}(\xi_{\usim{u}})-\bar{{\cal A}}g_{_F}(\xi_u,\xi_{\underline{u}})\right)du\\
&+\int_{0}^{nT} \psg \sigma^*(\xi_{\underline{u}})\nabla g_{_F}(\xi_u)-\sigma^*(\xi_{\usim{u}})\nabla g_{_F}(\xi_{\usim{u}}),dW_u\psd.
\end{align*}
Since $g_{_F}$ is bounded,
$$
\frac{1}{\sqrt{n}}\sum_{k=1}^n g_{_F}(\xi_{kT})-g_{_F}(\xi_{(k-1)T})=\frac{g_{_F}(\xi_{nT})-g_{_F}(\xi_{0})}{\sqrt{n}}\xrn{a.s.} 0 \quad \mbox{ as } \quad n\nrn.
$$
Then, it is now enough to show that 
\begin{equation}\label{amoinsabar}
\frac{1}{\sqrt{n}}\int_0^{nT}
\left({\cal A} g_{_F}(\xi_{\usim{u}})-\bar{{\cal A}}g_{_F}(\xi_u,\xi_{\underline{u}})\right)du\xrn{L^1} 0 \quad \mbox{ as } \quad n\nrn
\end{equation}
and that,
\begin{equation}
\label{martrij}
\frac{1}{\sqrt{n}} \int_{0}^{nT} \psg \sigma^*(\xi_{\underline{u}})\nabla g_{_F}(\xi_u)-\sigma^*(\xi_{\usim{u}})\nabla g_{_F}(\xi_{\usim{u}}),dW_u\psd\xrn{a.s.} 0 \quad \mbox{ as } \quad n\nrn.
\end{equation}
First, using that $g_{_F}$ is a bounded ${\cal C}^2$-function with bounded Lipschitz continuous derivatives, that $b$ and $\sigma$ are Lipschitz continuous functions, one checks that
\begin{align*}
\left|{\cal A}g_{_F}(\usim{x})-\bar{{\cal A}}g_{_F}(x,\underline{x})\right|\le& \;C\left(|\usim{x}-x|.|b(\underline{x})|+|\usim{x}-\underline{x}|+
|\usim{x}-\underline{x}|^2+\|\sigma(\underline{x})\|^2.|\usim{x}-x|\right).
\end{align*}
Then, using that
$$\max\left(|\xi_{\usim{u}}-\xi_{\underline{u}}|,|\xi_u-\xi_{\usim{u}}|\right)\le 2\sup_{v\in[\Gamma_{N(u)},\Gamma_{N(u)+1})}|\xi_v-\xi_{\underline{u}}|,$$
it follows from Lemma~\ref{lemmem1}$(iii)$ applied with $\phi=V^a$, $p=1$ and $p=2$, that,
\begin{align*}
\ES\Big[\big|{\cal A} g_{_F}(\xi_{\usim{u}})-\bar{{\cal A}}g_{_F}&(\xi_u,\xi_{\underline{u}})\big||{\cal F}_{\underline{u}}\Big]\\
&\le C\left(\sqrt{\gamma_{N(u)+1}}\,V^{\frac{a}{2}}(\xi_{\underline{u}})(1+|b(\xi_{\underline{u}})|+\|\sigma(\xi_{\underline{u}})\|^2)+\gamma_{N(u)+1}V^{a}(\xi_{\underline{u}})\right).
\end{align*}
By Asssumption $\mathbf{(S_{a,p})}$, we deduce that 
$$
\frac{1}{\sqrt{n}}\ES\left[ \int_0^{nT}
\left|{\cal A} g_{_F}(\xi_{\usim{u}})-\bar{{\cal A}}g_{_F}(\xi_u,\xi_{\underline{u}})\right|du\right]
\le \frac{C}{\sqrt{n}}\int_0^{nT} \ES[V^{\frac{3}{2}a}(\xi_{\underline{u}})]\sqrt{\gamma_{N(u)+1}}du.
$$
Now, since $p\ge 2$, $\frac{3}{2}a\le p+a-1$, and by 
~\eqref{contserie2}, we have
\begin{equation*}
\int_1^\infty\sqrt{\frac{\gamma_{N(u)+1}}{u}}du\le C\sum_{k\ge 1}\frac{\gamma_{k}^{\frac{3}{2}}}{\sqrt{\Gamma_k}}<+\infty.
\end{equation*}
Then~\eqref{amoinsabar}  follows from Lemma~\ref{lemme0}$(i)$ and the Kronecker Lemma like in the proof of Lemma~\ref{lemme4}.

\noindent Second, we focus on~\eqref{martrij}. Set $Z_0=0$ and 
$$
Z_n=\sum_{k=1}^n\frac{1}{\sqrt{k}}\int_{I_k} \psg \sigma^*(\xi_{\underline{u}})\nabla g_{_F}(\xi_u)-\sigma^*(\xi_{\usim{u}})\nabla g_{_F}(\xi_{\usim{u}}),dW_u\psd,\quad\  n\ge1.
$$
The sequence $(Z_n)$ being a $({\cal G}_n)$-adapted martingale, it follows from Doob's convergence Theorem for $L^2$-bounded martingales  
that~\eqref{martrij} holds if 
\begin{equation}\label{convmartrij}
\sup_{n\ge1}\ES[(Z_n)^2]<+\infty.
\end{equation}
Let us show~\eqref{convmartrij}. First,
\begin{align*}
\ES[(Z_n)^2]&= \sum_{k\ge1}\frac{1}{k}\int_{I_k}\ES\left[\left|\sigma^*(\xi_{\un{u}})\nabla g_{_F}(\xi_{u})-\sigma^*\nabla g_{_F}(\xi_{\usim{u}})\right|^2\right]du.
\end{align*}

\noindent By similar arguments as for~\eqref{25},
$$
\left|\sigma^*({\un{x}})\nabla g_{_F}(x)-\sigma^*\nabla g_{_F}({\usim{x}})\right|^2\le C\left(1+\|\sigma^*(\un{x})\|^2\right)\left(|x-\usim{x}|^2+|\un{x}-\usim{x}|^2\right).
$$
Then, owing to the  fact  that $\usim{u}\!\in [\un{u},u]$,   it follows from $\mathbf{(S_{a,p})}$ and Lemma~\ref{lemmem1}$(iii)$ that
$$
\ES\left[\left|\sigma^*(\xi_{\un{u}})\nabla g_{_F}(\xi_{u})-\sigma^*\nabla g_{_F}(\xi_{\usim{u}})\right|^2\right]
\le C\gamma_{N(u)+1}\ES[V^{2a}(\xi_{\un{u}})].
$$
Thus, since $u\le kT$ for every $u\in I_k$,
\begin{align*}
\sum_{k\ge1}\ES[|\Delta Z_k|^2]&\le \ES[|Z_1|^2]+C\sum_{k\ge2}\int_{I_k}\ES[V^{2a}(\xi_{\un{u}})]\frac{\gamma_{N(u)+1}}{u}du\\
&\le C\left(1+\int_1^{+\infty}\ES[V^{2a}(\xi_{\un{u}})]\frac{\gamma_{N(u)+1}}{u}du\right).
\end{align*}
Finally, by  a similar argument to~\eqref{contserie2}, we
have 
$$\int_1^{+\infty}\frac{\gamma_{N(u)+1}}{u}du\le C\sum_{k\ge1}\frac{\gamma_k^2}{\Gamma_k}<+\infty$$
and~\eqref{convmartrij} follows from Lemma~\ref{lemme0}$(i)$ and from the fact  that $ 2a\le p+a-1$ when $p\ge2$.
\end{proof}
\section{Proof of the main theorems}\label{PMT}
The first step for the proof of these theorems is now to state our main result 
about the sequence $({\cal P}^{(n,T)}(\omega,F_{_T}))_{n\ge1}$ studied in the two previous sections:
\begin{prop}\label{proppnt}
Let $T>0$, $a\in(0,1]$ and $p>2$. Assume $b$ and $\sigma$ are Lipschitz continuous functions  satisfying $\mathbf{(S_{a,p})}$ with an essentially quadratic Lyapunov function $V$ such that $\liminf_{|x|\rightarrow+\infty}V^{p+a-1}(x)/|x|>0$. Assume $\mathbf{(S^\nu_T)}$. 
Let $F:\mathbb{D}_{uc}([0,T],\ER^d)$ be a functional satisfying $\mathbf{(C_F^1)}$ and $\mathbf{(C_F^2)}$. Finally, assume that the step sequence $(\gamma_n)_{n\ge 1}$ satisfies~(\ref{nonincreagam}) and~(\ref{condpas33}).
Then, 
\begin{equation}\label{pntresult}
{\sqrt{nT}}\left(\pnt(\omega,F_{_T})-\PE_\nu(F_{_T})\right)\xrn[n\rightarrow+\infty]{\cal L}{\cal N}\left(0,\sigma_{_F}^2\right).
\end{equation}
\end{prop}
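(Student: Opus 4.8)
The plan is to read off the result from the error decomposition of Lemma~\ref{lemme1} by treating the leading martingale term and the three remainder terms separately and then invoking Slutsky's theorem. Multiplying the identity of Lemma~\ref{lemme1} by $\sqrt{nT}$ gives
$$
\sqrt{nT}\left(\pnt(\omega,F_{_T})-\PE_{\nu}(F_{_T})\right)=\frac{M_n}{\sqrt{nT}}+\frac{\Theta_{n,1}+\Theta_{n,2}+\Theta_{n+1,3}}{\sqrt{nT}},
$$
so it suffices to show that the first term converges in law to ${\cal N}(0,\sigma^2_{_F})$ while each remainder, once divided by $\sqrt{nT}$, vanishes in probability.

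For the leading term I would write $\frac{M_n}{\sqrt{nT}}=\frac{1}{\sqrt T}\,\frac{M_n}{\sqrt n}$ and apply Proposition~\ref{propTCL}, whose hypotheses coincide with those assumed here. Combined with the almost-sure identification of the limiting previsible bracket in Lemma~\ref{lemme3}, namely $\frac1n\sum_{k=2}^n\ES_{k-2}[(\Delta M_k)^2]\to\int\Psi(x)\nu(dx)$, this yields $\frac{M_n}{\sqrt{nT}}\xrn{\cal L}{\cal N}\big(0,\tfrac1T\int\Psi(x)\nu(dx)\big)$; by~\eqref{eq:sigmacarre} this variance is exactly $\sigma^2_{_F}$, the factor $1/\sqrt T$ accounting for the normalisation by the length $T$ of the blocks $I_k$.

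It then remains to dispose of the remainders. For $i=1,2$ I would quote Lemmas~\ref{lemme4} and~\ref{lemme5} directly: under the step condition~\eqref{condpas33} they give $\Theta_{n,i}/\sqrt n\xrn{\PE}0$, hence $\Theta_{n,i}/\sqrt{nT}\xrn{\PE}0$. For the last term, recall from the proof of Lemma~\ref{lemme1} that $\Theta_{n+1,3}=M^1_{n+1}-M^1_n=\phi_{_F}(n+1)-\ES_n[\phi_{_F}(n+1)]$; since $F$ is bounded under $\cunf$ one has $|\phi_{_F}(k)|\le \|F\|_\infty T$, so $|\Theta_{n+1,3}|\le 2\|F\|_\infty T$ and $\Theta_{n+1,3}/\sqrt{nT}\to 0$ almost surely. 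Slutsky's theorem then gives~\eqref{pntresult}.

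At the level of this proposition there is no genuine obstacle beyond careful bookkeeping of the $T$-normalisation, so that the limiting variance lands exactly on~$\sigma^2_{_F}$ of~\eqref{eq:sigmacarre}; the substantive work has already been carried out upstream — in the convergence of the bracket (Lemma~\ref{lemme3}) feeding the martingale central limit theorem of Proposition~\ref{propTCL}, and in the step-dependent estimates of Lemmas~\ref{lemme4} and~\ref{lemme5} that make the discretisation remainders negligible under~\eqref{condpas33}.
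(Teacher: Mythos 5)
Your proposal is correct and follows essentially the same route as the paper: the decomposition of Lemma~\ref{lemme1}, Proposition~\ref{propTCL} for the martingale part, Lemmas~\ref{lemme4} and~\ref{lemme5} for $\Theta_{n,1}$ and $\Theta_{n,2}$, boundedness of $F$ for $\Theta_{n+1,3}$, and Slutsky to conclude. Your explicit bookkeeping of the factor $1/T$ between the $\sqrt{n}$-normalisation of Proposition~\ref{propTCL} and the $\sqrt{nT}$-normalisation of the statement, via the bracket limit of Lemma~\ref{lemme3}, is in fact slightly more careful than the paper's one-line conclusion.
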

\begin{proof} Owing respectively to Lemma~\ref{lemme4}, Lemma~\ref{lemme5} and the fact that $F$ is bounded, $\Theta_{n,1}$, $\Theta_{n,2}$ and $\Theta_{n+1,3}$ defined in Lemma~\ref{lemme1} satisfy:
$$
\frac{\Theta_{n,1}+\Theta_{n,2}+\Theta_{n+1,3}}{\sqrt{nT}}\xrn{\PE}0 \quad \mbox{ as } \quad n\nrn.
$$
Then, the proposition follows from  Proposition~\ref{propTCL} and from the decomposition of  $\pnt-\PE_\nu(F_{_T})$ stated in Lemma~\ref{lemme1}.
\end{proof}
\noindent We are now able to prove Theorems~\ref{thprincipal} and~\ref{thprincipal3}.

\medskip
\noindent \textbf{Proof of Theorems~\ref{thprincipal}$(a)$ and~\ref{thprincipal3}.}
First, let $(t_k)_{k\ge1}$ denote a sequence of positive real numbers such that $t_k\rightarrow+\infty$. Set $\displaystyle{n_k=\lfloor \frac{t_k}{T}\rfloor}$. Since $F_{_T}$ is a bounded functional, we have:
\begin{align*}
\Big|\sqrt{t_k}\Big(\frac{1}{t_k}\int_0^{t_k} F_{_T}(\xi^{(\usim{u})})du&-\PE_\nu(F_{_T})\Big)-\sqrt{n_k T} \left(\pnt(\omega,F_{_T})-\PE_\nu(F_{_T})\right)\Big|\\
&\le 2\|F_{_T}\|_\infty(\sqrt{t_k}-\sqrt{n_k}T)+\|F_{_T}\|_\infty\frac{t_k-n_kT}{\sqrt{n_kT}}\xrn{k\nrn}0\quad a.s.
\end{align*}
Thus, Theorem~\ref{thprincipal3} follows taking $\usim{u}=u$. For Theorem~\ref{thprincipal}(a), setting $\usim{u}=\un{u}\vee (\lfloor u/T\rfloor T)$, and $t_n=\Gamma_n$, we obtain that
$$
\sqrt{\Gamma_n}\left(\frac{1}{\Gamma_n}\int_0^{\Gamma_n} F_{_T}(\xi^{(\un{u}\vee (\lfloor\frac{u}{T}\rfloor T))})du-\PE_\nu(F_{_T})\right)\xrn{\cal L}{\cal N}\left(0,\sigma_{_F}^2\right)0 \quad \mbox{ as } \quad n\nrn.
$$
Now, 
$$
\sqrt{\Gamma_n}\left|\bar{\nu}^{(n)}(\xi(\omega),F_{_T})-\frac{1}{\Gamma_n}\int_0^{\Gamma_n} F_{_T}(\xi^{(\un{u}\vee (\lfloor\frac{u}{T}\rfloor T))})du\right|
\le  \frac{\|F_{_T}\|_\infty}{\sqrt{\Gamma_n}}\sum_{k=1}^{\lfloor \Gamma_n \rfloor}\gamma_{N(k)+1},
$$
and the fact that $\displaystyle{\gamma_{N(k)+1}\le C\sum_{i=N(k-1)+1}^{N(k)}\gamma_i^2}$ implies that,
$$
\frac{1}{\sqrt{\Gamma_n}}\sum_{k=1}^{\lfloor \Gamma_n \rfloor}{\gamma_{N(k)+1}}\le  \frac{1}{\sqrt{\Gamma_n}}\sum_{i=1}^{n}\gamma_i^2.$$
By~\eqref{condpas33} and the Kronecker Lemma,
$$
\frac{1}{\sqrt{\Gamma_n}}\sum_{i=1}^{n}\gamma_i^s\xrn{n\nrn}0\quad\forall s\ge3/2.
$$
Applying this identity with $s=2$ yields the result.\hfill $\Box$

\medskip
\noindent\textbf{Proof of Theorem~\ref{thprincipal}$(b)$.} Owing to Theorem~\ref{thprincipal}$(a)$, it is now enough to show that
$$
\sqrt{\Gamma_n}\left(\bar{\nu}^{(n)}(\bar{X}(\omega),F_{_T})-\bar{\nu}^{(n)}(\xi(\omega),F_{_T})\right)\xrn[\PE]{n\nrn}0.
$$
Since $F_{_T}$ is a Lipschitz bounded functional, it follows from the definition of the previous occupation measures that
$$
\sqrt{\Gamma_n}\ES\left[\left|\bar{\nu}^{(n)}(\bar{X}(\omega),F_{_T})-\bar{\nu}^{(n)}(\xi(\omega),F_{_T})\right|\right]\le\frac{[F_{_T}]_{{\rm Lip}}}{\sqrt{\Gamma_n}}
\int_0^{\Gamma_n}\ES[\sup_{s\in[0,T]}|\xi_{\underline{u}+s}-\xi_{\un{\un{u}+s}}|]du.
$$
By Lemma~\ref{lemmem1}$(iv)$ and Jensen's inequality, for every $q>1$,
\begin{align*}
\ES[\sup_{s\in[0,T]}|\xi_{\un{u}+s}-\xi_{\un{\un{u}+s}}||{\cal F}_{\un{u}}]&\le \ES[\sup_{s\in[0,T]}|\xi_{\un{u}+s}-\xi_{\un{\un{u}+s}}|^q|{\cal F}_{\un{u}}]^\frac{1}{q}\\&
\le C\left(V^\frac{aq}{2}(\xi_{\un{u}})\gamma_{N(u)+1}^{\frac{q}{2}-1}\right)^{\frac{1}{q}}\le CV^{\frac{a}{2}}(\xi_{\un{u}})\gamma_{N(u)+1}^{\frac{1}{2}-\frac{1}{q}}.
\end{align*}
Thus, we deduce that
$$
\int_0^{\Gamma_n}\ES[\sup_{s\in[0,T]}|\xi_{u+s}-\xi_{\un{\un{u}+s}}|]du\le C\sum_{k=1}^{n-1}\gamma_{k}^{\frac{3}{2}-\frac{1}{q}}\ES[V^{\frac{a}{2}}(\xi_{_{\Gamma_{k-1}}})].
$$
Let $\delta$ be a positive number such that~\eqref{condpas34} holds. Taking $q$ such that $1/q\le\delta$,
we deduce from~\eqref{condpas34} and Lemma~\ref{lemme0}$(i)$ that
$$
\sum_{k\ge1}\frac{\gamma_{k}^{\frac{3}{2}-\frac{1}{q}}}{\sqrt{\Gamma_k}}\ES[V^{\frac{a}{2}}(\xi_{_{\Gamma_{k-1}}})]=
\int_{0}^{+\infty}\ES[V^{\frac{a}{2}}(\xi_{\un{u}})] \frac{\gamma_{N(u)+1}^{\frac{3}{2}-\frac{1}{q}}}{\sqrt{\Gamma_{N(u)+1}}}du<+\infty.
$$
We again deduce the result from Kronecker's Lemma.\hfill $\Box$

\medskip
\noindent\textbf{Proof of Theorem~\ref{thprincipal2}.} We only give the main ideas of the proof of 
this result about the ``perfect Euler scheme'' $(X_t)$, that is naturally simpler than that of the discretized processes. First, the reader can check that setting 
$$\tilde{\cal P}^{(n,T)}(\omega,F_{_T})=\frac{1}{nT}\int_0^{nT} F_{_T}(X^{(u)})du,$$
one obtains a similar decomposition as that of Lemma~\ref{lemme1} replacing $\usim{u}$ by $u$ and $\phi_{_F}$ by $\tilde{\phi}_F$ defined by
\begin{equation*}
\tilde{\phi}_{_F}(1)= 0 \quad \textnormal{ and } \quad 
\int_{I_{k-1}} F_{_T}(X^{(u)})du  \quad \textnormal{ if $k\ge 2$.}
\end{equation*}
The main difference in this decomposition is that the term corresponding to $\Theta_{n,1}$ is null. Then, since the assumption
$\liminf_{|x|\rightarrow+\infty} V^{p+a-1}(x)/|x|>0$ is only needed  in the proof of the result about $\Theta_{n,1}$ (see Lemma \ref{lemme4}), we deduce that  it is not necessary here. Then, the sequel of the proof works since the statements of Lemma~\ref{lemme0} still hold if one replaces
 $\xi$ by $X$. To be  precise, the first statements of $(i)$ and $(ii)$ can be directly derived from~\cite{panloupthese} (Chapter 1) and the second ones from an adaptation of the proof of this lemma.  \hfill $\Box$

\section{Numerical Test on Barrier Options in the Heston model}\label{simulations}
As shown in~\cite{PP1}, our algorithm can be successfully implemented   for pricing path-dependent options in stochastic volatility models when the volatility process evolves in its stationary regime. Furthermore, such stationary versions of stochastic volatility models are more performing to take into account the behaviour of implicit volatility  for short maturities. Then, even if the assumptions of our main theorems are usually not satisfied for the functionals involved in this context, we choose in this section to
illustrate them by such an example. To be precise, we test numerically the asymptotic normality obtained in the main results on the computation of several Barrier options in a Heston stationary stochastic volatility model. The dynamics of the traded asset price process $(S_t)_{t\ge0}$ is given by:  
\begin{align*}
&dS_t=S_t(rdt+\sqrt{(1-\rho^2)v_t}dW^1_t+\rho\sqrt{v_t} dW_t^2),\quad S_0=s_0>0,\\
&dv_t=k({\theta}-v_t)dt+\varsigma\sqrt{v_t}dW_t^2,\quad v_0>0,
\end{align*}
where $r$ denotes the interest rate, $(W^1,W^2)$ is a standard
two-dimensional Brownian motion, $\rho\in[-1,1]$ and $k$, $\theta$
and $\varsigma$ are some nonnegative numbers. This model was
introduced by Heston (\cite{heston}). The equation for
$(v_t)$ has a unique (strong) pathwise continuous solution living
in $\ER_+$. If moreover, $2k\theta>\varsigma^2$ then, $(v_t)$ is a
positive process (see~\cite{lamblapeyre}). In this case, the volatility process $(v_t)$
has a unique invariant probability  ${\nu_0}$ with {\em gamma} distribution, namely
${\nu_0}=\gamma(a,b)$ with $a=(2k)/\varsigma^2$ and
$b=(2k\theta)/\varsigma^2$. Thus, we assume that
$(v_t)$ evolves in its stationary regime, $i.e.$ that $${\cal
L}(v_0)={\nu_0}.$$
Under this assumption, we showed in~\cite{PP1} that any option premium can be expressed  as the
expectation of a functional of a two-dimensional stationary
stochastic process.  Let us recall the idea: we will write $(S_t)$ as a functional of a stationary process. Elementary It\^o calculus yields
\begin{equation}\label{hestoneq}
S_t=s_0\exp\Big(rt-\frac{1}{2}\int_0^tv_sds+\rho\int_0^t\sqrt{v_s}dW_s^2+\sqrt{1-\rho^2}\int_0^t\sqrt{v_s}dW^1_s\Big).
\end{equation}
Introducing the  $2$-dimensional $SDE$,
\begin{equation}
\begin{cases}
dy_t=-y_tdt+\sqrt{v_t}dW_t^1,\\
dv_t=k({\theta}-v_t)dt+\varsigma\sqrt{v_t}dW_t^2,
\end{cases}
\end{equation}
and using the fact that
$$
 \int_0^t\sqrt{v_s}dW^1_s=y_t-y_0+\int_0^t y_sds\quad\textnormal{and}\quad \int_0^t\sqrt{v_s}dW_s^2=\frac{v_t-v_0-k\theta
t+k\int_0^t v_sds}{\varsigma},
$$
we deduce that we can construct a (continuous) map $\Phi$ from ${\cal C}(\ER_+,\ER^2)$ to ${\cal C}(\ER_+,\ER)$ such that
$(S_t)_{t\ge0}=\Phi((y_t,v_t)_{t\ge0})$. Now, we have built $(y_t)$ so that $(y_t,v_t)_{t\ge0}$ has a stationary regime. Denoting by $\mu$ the invariant distribution of $(y_t,v_t)$, we obtain that
$$
\ES[F(S_t,0\le t\le T)]=\ES_\mu[F\circ\Phi((y_t,v_t),0\le t\le T)].
$$
For further details we refer to~\cite{PP1}. Here, we are interested with an {\em Up-and-Out}  barrier option whose discounted payoff is given by:
$$
F(S_t,0\le t\le T)=e^{-rT}\left(S_T-K\right)_+ {\bf 1}_{\{\sup_{0\le t\le T} S_t\le L\}}
$$
where $L>K>0$. We now specify the discretization. First,  the genuine Euler scheme of the so-called Heston volatility process (also known as the Cox-Ingersoll-Ross process) $(v_t)$ cannot be implemented since it does note preserve the positivity. Thus, we must replace it by a specific discretization scheme: we denote by $(\bar{v}_t)$ the stepwise constant Euler scheme built as follows:  
$$
\bar{v}_{_{\Gamma_{n+1}}}=\big|\bar{v}_{_{\Gamma_n}}+k\gamma_{n+1}
(\theta-\bar{v}_{_{\Gamma_n}})+\varsigma\sqrt{\bar{v}_{_{\Gamma_n}}}(W^2_{_{\Gamma_{n+1}}}-W^2_{_{\Gamma_n}})\big|\quad \mbox{ and }\quad \bar{v}_0=x>0.
$$
Note that convergence  properties of this scheme have been studied in a constant step framework in \cite{berkaoui} (see also \cite{delstra},  \cite{alfonsi}, and \cite{andersen} for other specific discretization schemes).

\smallskip
\noindent Second, we denote by $(\xi_t)$ the continuous discretization scheme of $(\log(\frac{S_t}{s_0}))_{t\ge 0}$ defined by $\xi_0=0$ and
\begin{equation}\label{contructxi}
\xi_t=\xi_{_{\Gamma_n}}+(r-\frac{1}{2}\bar{v}_{_{\Gamma_n}})t+\rho\sqrt{\bar{v}_{_{\Gamma_n}}}(W_t^2-W^2_{_{\Gamma_n}})+\sqrt{(1-\rho^2)\bar{v}_{_{\Gamma_n}}}(W_t^1-W^1_{_{\Gamma_n}}), \; t\!\in [\Gamma_n, \Gamma_{n+1}], \; n\ge 0.
\end{equation}
Note that we do not need to introduce the Euler of $(y_t)$ since its use is nothing but a theoretical way to justify why an algorithm for the approximation of the stationary regime can be adapted to this context. Finally, in order to compute the supremum of $(\xi_t)$, let us recall the principle of the so-called Brownian Bridge method (transposed to this framework). Set
$$
W^{(\Gamma_n)}_t=\rho (W^1_{_{\Gamma_n+t}}-W^1_{_{\Gamma_n}})+\sqrt{1-\rho^2}(W^2_{_{\Gamma_n+t}}-W^2_{_{\Gamma_n}})
$$
and let $(Y_t^{W,\gamma})$ denote the Brownian Bridge on $[0,\gamma]$ defined by $Y_t^{W,\gamma}= W_t-\frac {t}{\gamma} W_{\gamma}$, $t\!\in [0, \gamma]$. For every $t\in[\Gamma_n,\Gamma_{n+1}]$, we have
$$
\xi_t=\xi_{_{\Gamma_n}}+\frac{\xi_{_{\Gamma_{n+1}}}-\xi_{_{\Gamma_n}}}{\Gamma_{n+1}-\Gamma_n}(t-\Gamma_n)+\sqrt{\bar{v}_{_{\Gamma_n}}} Y_{t}^{W^{(\Gamma_n)},\gamma_{n+1}}.
$$
Using the independence and the Gaussian properties of the Brownian motion, one deduces that, for every $n\ge1$, the processes $(\xi_t)_{t\in[\Gamma_l,\Gamma_{l+1}]}$, $l\in\{0,\ldots,n-1\}$ are conditionally independent given the $\sigma$-field $\sigma((\xi_{\gamma_{_l}},\bar{v}_{_{\Gamma_l}},0\le\l\le n)$  and that
\begin{align*}
{\cal L}\Big((\xi_{t})_{t\in[\Gamma_l,\Gamma_{l+1}]}|(\xi_{_{\Gamma_l}},\xi_{_{\Gamma_{l+1}}},&\bar{v}_{_{\Gamma_l}})=(x_l,x_{l+1},{v}_l)\Big)\\&=
{\cal L}\left(x_l+\frac{x_{l+1}-x_l}{\Gamma_{l+1}-\Gamma_l}t+\sqrt{v_l} Y_{t}^{W,\gamma_{_{l+1}}},t\in[0,\gamma_{_{l+1}}]\right)
\end{align*}
where $W$ denotes a standard Brownian motion.
Then, using the symmetry principle, one can show that, 
for every $x,y\in\ER$, for every $z\ge \max(x,y)$  and positive $\lambda$ and $\gamma,$
$$
\PE(\sup_{t\in[0,\gamma]} x+(y-x)\frac{t}{\gamma}+\lambda Y^{W,\gamma}_t\le z)=1-\exp(-\frac{2}{\gamma \lambda^2}(z-x)(z-y)).
$$
It follows that given $(\xi_{_{\Gamma_l}},\xi_{_{\Gamma_{l+1}}},\bar{v}_{_{\Gamma_l}})$, $\sup_{t\in[\Gamma_l,\Gamma_{l+1}]}\xi_{t}$ can be simulated by the method of inversion of the distribution function. 

\medskip
\noindent Let us now detail the algorithm.

\smallskip
\noindent ${\textbf{{\sc Step} 1}}:$ From $n=0$ to $n=N(T)$. At each step between $n=0$ and $n=N(T)-1$, simulate recursively,  $\bar{v}_{_{\Gamma_{n+1}}}$ and $\xi_{_{\Gamma_{n+1}}}$. Then,  use the Brownian Bridge method to simulate $V_n=\sup_{t\in[\Gamma_n,\Gamma_{n+1}]}\xi_{t}$ given $(\xi_{_{\Gamma_n}},\xi_{_{\Gamma_{n+1}}},\bar{v}_{_{\Gamma_n}})$. Compute recursively $M_n:=\max(V_1,\ldots,V_n)=\max(M_{n-1},V_n)$. At time $N(T)$, compute 
$$
\bar{\nu}^{(1)}(\xi(\omega),F)=e^{-rT}(s_0\exp({\xi}_{_T})-K)_+{\bf 1}_{\{s_0\sup_{t\in[0, T]} exp(\xi_t)\le L\}}.
$$
$$
\vdots\qquad\vdots
$$

\noindent${\textbf{{\sc  Step} i}}:$ From $n=N(T+\Gamma_{i-1})+1$ to $n=N(T+\Gamma_{i}).$ If $M_{N(T+\Gamma_{i-1}+1)}=V_{i-1}$, replace
$M_{N(T+\Gamma_{i-1}+1)}$ by $\max(V_i,\ldots, V_{N(T+\Gamma_{i-1}+1)})$. Store $(\xi_{_{\Gamma_{i-1}}},\ldots,\xi_{_{\Gamma_{N(T+\Gamma_{i-1})+1}}})$ and $(V_i,\ldots, V_{N(T+\Gamma_{i-1}+1)})$. As in Step 1, from $n=N(T+\Gamma_{i-1})+1$ to $n=N(T+\Gamma_{i})$, compute recursively $\bar{v}_{_{\Gamma_{n+1}}}$, $\xi_{_{\Gamma_{n+1}}}$, $V_n$ and the maximum of $V_i, V_{i+1},\ldots, V_n$. Then, at time $N(T+\Gamma_{i})$,
\begin{align*}
\bar{\nu}^{(i)}&(\xi(\omega),F)=\bar{\nu}^{(i-1)}(\xi(\omega),F)\\&+\frac{\gamma_{i+1}}{\Gamma_i}\left(e^{-rT}(s_0\exp({\xi_{_T}}-\xi_{_{\Gamma_{i-1}}})-K)_+{\bf 1}_{\{\underset{t\in[\Gamma_{i-1}, \Gamma_{N(T+\Gamma_{i-1})+1}]}{\sup} \hskip -1 cm  s_0\exp(\xi_t-\xi_{_{\Gamma_{i-1}}})\le L\}}-\bar{\nu}^{(i-1)}(\xi(\omega),F)\right).
\end{align*}

\noindent For the following choices of parameters,
\begin{equation}\label{paramfunc}
s_0=50,\quad r=0.05 ,\quad  T=1 ,\quad\rho=0.5
,\quad\theta=0.01,\quad \varsigma=0.1,\quad k=2,\quad K=50,\quad L=55, 
\end{equation}
we want now to obtain an approximation of the distribution of  the  (asymptotically normal) normalized error
$$
{\cal E}_N:=\sqrt{\Gamma_N}\left(\bar{\nu}_N(\xi(\omega),F)-e^{-rT}\ES[\left(S_T-K\right)_+ {\bf 1}_{\{\sup_{0\le t\le T} S_t\le L\}}]\right)
$$
First, we need to have an accurate approximation of the (risk-neutral) price. In this way, we choose to combine a very long simulation with a variance reduction method taking the corresponding Barrier option in the Black-Scholes model as a control variable. Indeed, on the one hand, it is well-known that the price of such Barrier option has a closed form in the Black-Scholes model (based on the Black-Scholes formula for European options) and on the other hand, this price can be approximated using the algorithm described above by simply replacing the stochastic volatility $(\bar{v}_{t})$ by a constant volatility denoted by $\sigma$. Note that the natural choice for $\sigma$ is the long term volatility $\theta$ which is   the mean of the stationary volatility process $(\bar{v}_{t})$ as well. Then, denoting by $(\xi_t^{{BS}})$ the genuine Euler discretization scheme of the Black-Scholes model (especially with the same trajectory for $W^1$) with constant volatility $\theta$, we approximate the price of the option by
$$
\bar{\nu}^{(N)}(\xi(\omega),F)-\bar{\nu}^{(N)}(\xi^{BS}(\omega),F)+C_{{\rm bar}}^{BS}(r,\sqrt{\theta},T,K,L)$$
where $C_{{\rm bar}}^{BS}$ denotes the (explicit) price of the up-and-out barrier option in the Black-Scholes model. Doing so with a simulation size $N=2.10^8$, we get the following accurate approximation of the  premium:
$$
e^{-rT}\ES[\left(S_T-K\right)_+ {\bf 1}_{\{\sup_{0\le t\le T} S_t\le L\}}]\approx 1,689.
$$
Then, setting $N=5.10^5$, we proceed $M=10^4$ independent Monte Carlo simulations of ${\cal E}_N$.  We denote  by $\bar{\sigma}_{_F}^2$ the empirical variance of the sample $({\cal E}_N^1,\ldots,{\cal E}_N^{M})$ (which corresponds to an estimation of $\sigma_{_F}^2$).
In Figure~\ref{f5} are depicted the density of a centered Gaussian random variable with variance $\bar{\sigma}_{_F}^2$ and the empirical density $\hat{f}_h$
(smoothed by a convolution with a Gaussian kernel) defined by:
$$
 \hat{f}_h(x)=\frac{1}{Mh}\sum_{\ell=1}^M\frac{1}{\sqrt{2\pi}}\exp\left(-\frac{(x-{\cal E}_N^{(\ell)})^2}{2h^2}\right).
 $$

\begin{figure}
\begin{center}
\includegraphics[width=7cm]{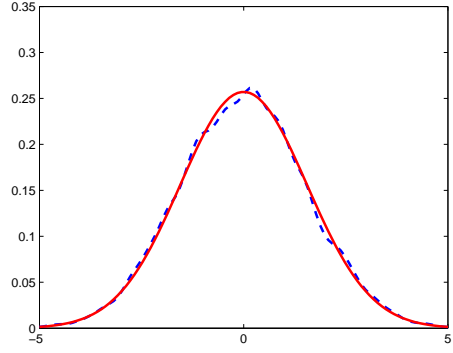}
\caption{Comparaison of the approximate density $\hat{f}_h$ of ${\cal E}_N$ (dotted line) with the density of ${\cal N}(0,\bar{\sigma}_F^2)$,
 $N=5.10^5$, $M=5.10^3$, $h=M^{-\frac{1}{5}}$.\label{f5}}
\end{center}\end{figure}

\medskip
As a conclusion, this numerical experiment first  illustrates that the $CLT$ occurs at a reasonable range (for numerical purpose) and also suggests that a local version holds true as well (``convergence of the density"). Another extension of our result could be, in the spirit of Bhattacharia's  result in~\cite{bhatta82} to establish an invariance principle of Donsker type.
\section*{Appendix}
\subsection*{A. Proof of identity~\eqref{eq:sigmacarre2}.} 
We have to deduce~\eqref{eq:sigmacarre2} from~\eqref{eq:sigmacarre}. First, 
we have (dropping $^x$ in $A^x_t$):
\begin{align*}
\ES_\nu\left[\left(\ES[A_{_{2T}}|{\cal F}_{_{2T}}]-\int_T^{2T}\sigma^*\nabla g_{_F}(X_u^x) dW_u\right)\ES[A_{_T}|{\cal F}_{_T}]\right]&=\ES_\nu\left[\ES[A_{_{2T}}|{\cal F}_{_{2T}}]\ES[A_{_T}|{\cal F}_{_T}]\right]\\
&=\ES_\nu[(\ES[A_{_T}|{\cal F}_{_T}])^2]
\end{align*}
since one easily checks that $\ES[A_{_{2T}}-A_{_T}|{\cal F}_{_T}]=0$. It follows that
\begin{align*}
T\sigma^2_{_F}=\ES_\nu\left[\left(\ES[A_{_{2T}}|{\cal F}_{_{2T}}]-\int_T^{2T}\sigma^*\nabla g_F(X_u)dW_u\right)^2\right]-\ES_\nu[(\ES[A_{_T}|{\cal F}_{_T}])^2].
\end{align*}
Second, using the Markov property (or the fact that $X^{(u),x}=\varphi(X_u^x,W^{(u)})$) and the stationarity of the process, one observes that
$\ES[A_{_{2T}}-A_{_T}|{\cal F}_{_{2T}}]$ and $\ES[A_{_T}|{\cal F}_{_T}]$ have the same distribution under $\PE_\nu$. In particular,
$$
\ES_\nu\left[\left(\ES[A_{_{2T}}-A_{_T}|{\cal F}_{_{2T}}]\right)^2\right]=\ES_\nu\left[\left(\ES[A_{_T}|{\cal F}_{_T}]\right)^2\right].
$$
Since $\ES[A_{_{2T}}|{\cal F}_{_{2T}}]=A_{_T}+\ES[A_{_{2T}}-A_{_T}|{\cal F}_{_{2T}}]$ and $\ES_\nu[A_{_T}\ES[A_{_{2T}}-A_{_T}|{\cal F}_{_{2T}}]]=0$,
we obtain that
$$
T\sigma^2_{_F}=\ES_\nu[A_{_T}^2]-2\ES_\nu\left[\ES[A_{_{2T}}|{\cal F}_{_{2T}}]\int_T^{2T}\sigma^*\nabla g_{_F}(X_u)dW_u\right]+\ES_\nu\left[\left(\int_T^{2T}\sigma^*\nabla g_{_F}(X_u)dW_u\right)^2\right].
$$
All we have to do now is to check that the three above terms correspond respectively to the three parts of~\eqref{eq:sigmacarre2}. First, by Fubini's Theorem,
$$
\ES_\nu[A_{_T}^2]=\int_{u=0}^T\int_{v=0}^T\ES_\nu[(F(X^{(u)}-f_{_F}(X_u))(F(X^{(v)}-f_{_F}(X_v)))]dv.
$$
Owing to the stationarity of the process under $\PE_\nu$, we have
$$
\ES_\nu[(F(X^{(u)}-f_{_F}(X_u))(F(X^{(v)}-f_{_F}(X_v)))]=C_{_F}(|u-v|),
$$ where $C_{_F}$ is defined by~\eqref{covariancefunction}.
This yields
$$
\ES_\nu[A_{_T}^2]=2\int_0^T\int_0^u C_{_F}(u-v)dv du=2\int_0^T (T-u)C_{_F}(u)du.
$$
Second, setting 
\begin{equation}\label{eq:mtf}
M_T^f=\int_0^t\sigma^*\nabla f(X_u)dW_u,
\end{equation}
we have
\begin{align*}
\ES_\nu\left[\ES[A_{_{2T}}|{\cal F}_{_{2T}}]\int_T^{2T}\sigma^*\nabla g_{_F}(X_u)dW_u\right]=&\int_0^{2T}\ES_\nu[(F_{_T}(X^{(u)})-f_{_F}(X_u))(M_{_{2T}}^{g_{_F}}-M_{_T}^{g_{_F}})] du\\
=&\int_0^T\ES_\nu[(F_{_T}(X^{(u)})-f_{_F}(X_u))(M_{_{T+u}}^{g_{_F}}-M_{_T}^{g_{_F}})]du\\
&+\int_T^{2T}\ES_\nu[(F_{_T}(X^{(u)})-f_{_F}(X_u))(M_{_{2T}}^{g_{_F}}-M_{_u}^{g_{_F}})]du.
\end{align*}
Now, the fact that $M_{_{T+u}}^{g_{_F}}-M_{_T}^{g_{_F}}=g_{_F}(X_{T+u})-g_{_F}(X_{T})-\int_T^{T+u}{\cal A} g_F(X_v)dv$ implies that we can make use of the stationarity property to obtain for every $u\in[0,T]$,
\begin{align*}
\ES_\nu[(F_{_T}(X^{(u)})-f_{_F}(X_u))&(M_{_{T+u}}^{g_{_F}}-M_{_T}^{g_{_F}})]\\
&=\ES_\nu\left[\left(F_{_T}(X)-f_{_F}(X_0)\right)\left(g_{_F}(X_{T})-g_{_F}(X_{0})-\int_0^{T}{\cal A} g_F(X_v)dv\right)\right]\\
&=\ES_\nu[(F_{_T}(X)-f_{_F}(X_0))(M_T^{g_{_F}}-M_{T-u}^{g_{_F}})].
\end{align*}
With similar arguments, one checks that for every $u\in[T,2T]$,
$$
\ES_\nu[(F_{_T}(X^{(u)})-f_{_F}(X_u))(M_{_{2T}}^{g_{_F}}-M_{_u}^{g_{_F}})]=\ES_\nu[(F_{_T}(X)-f_{_F}(X_0))M_{_{2T-u}}^{g_{_F}}].
$$
It follows that
\begin{align*}
&\ES_\nu\left[\ES[A_{_{2T}}|{\cal F}_{_{2T}}]\int_T^{2T}\sigma^*\nabla g_{_F}(X_u)dW_u\right]\\&=
\ES_\nu\left[\left(F_{_T}(X)-f_{_F}(X_0)\right)\left(TM_T^{g_{_F}}-\int_0^T M_{T-u}^{g_{_F}}du+\int_T^{2T}M_{_{2T-u}}^{g_{_F}}du\right)\right]=T\ES_\nu[F_{_T}(X)M_T^{g_{_F}}].
\end{align*}
\begin{align*}
\mbox{Finally,  }\qquad \ES_\nu\left[\left(\int_T^{2T}\sigma^*\nabla g_{_F}(X_u)dW_u\right)^2\right]&=\int_T^{2T}\ES_\nu\left[\left|\sigma^*\nabla g_F(X_u)\right|^2\right]du\qquad\qquad\qquad\\
&=T\int|\sigma^*\nabla g_F(x)|^2\nu(dx)
\end{align*}
owing to the stationarity of the process. This concludes the proof.\hfill $\Box$

\subsection*{B.  Computation of $\sigma^2_{_F}$ when $F(\alpha)=\phi(\alpha_{_T})$.} As mentioned in~\eqref{remarquecalcul}, when $\phi={\cal A}h+C$, the $CLT$ for marginal functions combined with a change of variable yields $\sigma^2_{_F}=\int_{\ER^d} |  \sigma^*\nabla h (x) |^2\nu(dx)$. Let us check this formula starting from~\eqref{eq:sigmacarre}. Following the notation introduced in~\eqref{eq:mtf}, we have
\begin{align*}
&\ES[A_{_{2T}}|{\cal F}_{_{2T}}]-\ES[A_{_T}|{\cal F}_{_T}]-\int_T^{2T}\sigma^*\nabla g_{_F}(X_u^x)dW_u= \varphi_1(x,.)-\varphi_2(x,.)\quad\textnormal{where,}\\
&\varphi_1(x,.)=\int_0^T \phi(X_{u+T}^x)du +\int_T^{2T}\ES\left[\phi(X_{u+T}^x) {\cal F}_{2T}\right]du-\int_T^{2T}f_{_F}(X_u^x)du-(M_{2T}^{g_{_F}}-M_T^{g_{_F}})\quad\textnormal{and}\\
&\varphi_2(x,.)=\int_0^T\ES\left[\phi(X_{u+T}^x)\,|\, {\cal F}_{T}\right]du.
\end{align*}
In this case, $f_{_F}=P_{_T}\phi$ and using that ${\cal A}$ and $P_{_T}$ commute, one checks that  $f_{_F}-\nu(f_{_F})= {\cal A} P_{_T} h$. This implies that $g_{_F}=P_{_T}\phi$. For the sake of simplicity we may assume w.l.g.  $\nu(f_{_F})=\nu(\phi)=0$. Then, on the one hand 
\begin{eqnarray*}
\varphi_1(x,.) &=& \int_T^{2T} {\cal A}h(X_u)du+ \int_T^{2T}P_{u-T}\phi(X_{2T})du-\Big[g_{_F}(X_{2T})-g_{_F}(X_{_T})\Big]\\
&=& h(X_{2T})-h(X_T)-(M_{2T}^h-M_T^h)+\int_0^T{\cal A}P_u h(X_{2T})du -\Big[g_{_F}(X_{2T})-g_{_F}(X_{_T})\Big]\\
&=& h(X_{2T})-h(X_T)-(M_{2T}^h-M_T^h)+\underbrace{P_{_T}h(X_{2T})}_{=g_{_F}(X_{2T})}-h(X_{2T}) -\Big[g_{_F}(X_{2T})-g_{_F}(X_{_T})\Big]\\
&=& g_{_F}(X_T)-h(X_T)-(M_{2T}^h-M_T^h).
\end{eqnarray*}
On the other hand
\begin{equation*}
\varphi_2(x,.)= \int_0^TP_u\phi(X_T)du= \int_0^T {\cal A}P_u h(X_T)du= P_{_T}h(X_T)-h(X_T)=g_{_F}(X_T)-h(X_T).\\
\end{equation*}
so that
$$
\sigma^2_{_F}=\frac{1}{T}\ES_\nu[(M_{2T}^h-M_T^h)^2]=\frac{1}{T}\int_T^{2T}\ES_\nu[|\sigma^*\nabla h(X_u)|^2]du=\int|\sigma^*\nabla h(x)|^2\nu(dx).\qquad \Box
$$
\small

\end{document}